\newcommand{\ds}{\displaystyle}
\newcommand{\BE}{\begin{equation}}
\newcommand{\BEN}{\begin{equation*}}
\newcommand{\EE}{\end{equation}}
\newcommand{\EEN}{\end{equation*}}
\newcommand{\BL}{\begin{lemma}}
\newcommand{\EL}{\end{lemma}}
\newcommand{\BT}{\begin{theorem}}
\newcommand{\ET}{\end{theorem}}
\newcommand{\BP}{\begin{proposition}}
\newcommand{\EP}{\end{proposition}}
\newcommand{\BC}{\begin{corollary}}
\newcommand{\EC}{\end{corollary}}
\newcommand{\BR}{\begin{remark}}
\newcommand{\ER}{\end{remark}}
\numberwithin{equation}{section}
\title{Exponential decay estimates for the stability of boundary layer solutions to Poisson-Nernst-Planck systems: one spatial dimension case}
\date{}
\author{Chia-Yu Hsieh  \thanks{Department of Mathematics, National Taiwan University, Taipei, Taiwan 10617, email: {\tt b92201049@gmail.com}},
\and Tai-Chia Lin \thanks{Institute of Applied Mathematical Sciences, Center for Advanced Study in Theoretical Sciences (CASTS), National Taiwan University, Taipei, Taiwan 10617,  email:{\tt  tclin@math.ntu.edu.tw}}
}
\newtheorem{thm}{Theorem}[section]
\newtheorem{prop}[thm]{Proposition}
\newtheorem{cor}[thm]{Corollary}
\newtheorem{theorem}{Theorem}[section]
\newtheorem{lemma}[theorem]{Lemma}
\newtheorem{proposition}[theorem]{Proposition}
\newtheorem{corollary}[theorem]{Corollary}
\newtheorem{remark}[theorem]{Remark}
\begin{document}
\maketitle

\begin{abstract}
With a small parameter $\varepsilon$, Poisson-Nernst-Planck (PNP) systems over a finite one-dimensional (1D) spatial domain have steady state solutions, called 1D boundary layer solutions, which profiles form boundary layers near boundary points and become flat in the interior domain as $\varepsilon$ approaches zero. For the stability of 1D boundary layer solutions to (time-dependent) PNP systems, we estimate the solution of the perturbed problem with global electroneutrality. We prove that the $H^{-1}_x$ norm of the solution of the perturbed problem decays exponentially (in time) with exponent independent of $\varepsilon$ if the coefficient of the Robin boundary condition of electrostatic potential has a suitable positive lower bound. The main difficulty is that the gradients of 1D boundary layer solutions at boundary points may blow up as $\varepsilon$ tends to zero. The main idea of our argument is to transform the perturbed problem into another parabolic system with a new and useful energy law for the proof of the exponential decay estimate.
\end{abstract}

\section{Introduction}
\ \ \ \ The Poisson-Nernst-Planck (PNP) system, a well-known mathematical model for ion transport, plays a crucial role in the study of many physical and biological problems~\cite{AMT-ttsp00,BD-ahp00,CLB-bp97,E98,G-zamm85,H1,MRS,Mori1,PJ,RCA89,Rolf2}. Such a model can be represented as
\begin{eqnarray}
   {{n}_{t}} &=& -\nabla \cdot {{J}_{n}}\,,\hspace*{3.1cm}\quad {{p}_{t}}=-\nabla \cdot {{J}_{p}}\,, \label{fx2-o} \\
   {{J}_{n}} &=& -{{D}_{n}}\left( \nabla n-\frac{{{z}_{n}}e}{{{k}_{B}}T}n\nabla \phi  \right)\,,\quad {{J}_{p}}= -{{D}_{p}}\left( \nabla p+\frac{{{z}_{p}}e}{{{k}_{B}}T}p\nabla \phi  \right),  \label{fx1-o} \\
   {{\varepsilon}}\Delta \phi &=& -\rho +{{z}_{n}}en-{{z}_{p}}ep\,,   \label{po1-o}
\end{eqnarray}
for $x\in \Omega, t>0$, where $\left( n,p,\phi \right)$ depends on $x$ and $t$, $\Omega \subset {{\mathbb{R}}^{N}}$ is a bounded smooth domain in ${{\mathbb{R}}^{N}},N\ge 1$, $\nabla=\left( {{\partial }_{{{x}_{1}}}},\cdots ,{{\partial }_{{{x}_{N}}}} \right)$ and $\Delta =\sum\limits_{j=1}^{N}{\partial _{{{x}_{j}}}^{2}}$ is the Laplacian.
Physically, $\phi$ is the electrostatic potential, $n$ is the charge density of anions, $p$ is the charge density of cations, $\rho$ is the permanent (fixed) charge density in the domain, $z_n$, $z_p$ are the valence of ions, $e$ is the elementary charge, $k_B$ is the Boltzmann constant, $T$ is temperature, $J_n, J_p$ are the ionic flux densities and $D_n, D_p$ are their diffusion coefficients. The parameter $\varepsilon$ related to the dielectric constant and the Debye length can be assumed as a small parameter tending to zero (cf.~\cite{BCE, EL-sm07,Lu}). For simplicity, we only consider monovalent ions, that is, $z_n=z_p=1$, and set $e/k_BT=1$, $\rho=0$, $D_n=D_p=1$. Besides, we rescale $\varepsilon$ and transform (\ref{fx2-o})-(\ref{po1-o}) into
\begin{align}
 n_t & = -\nabla\cdot J_n\,, \hspace{1.7cm} p_t = -\nabla\cdot J_p, \label{fx2}\\
 J_n & = -(\nabla n -n \nabla \phi)\,, \quad J_p = -(\nabla p +p \nabla \phi), \label{fx1}\\
 \varepsilon \Delta \phi & = n -p\,,  \label{po1}
\end{align}
for $x\in \Omega, t>0$.

Debye (diffuse) layers occur in ionic liquids near electrodes and have many applications in the fields of chemical physics and biophysics (cf.~\cite{H-Z-81}). To see Debye layers, solutions of (\ref{fx2})-(\ref{po1}) with boundary layers need to be investigated. For simplicity, the domain $\Omega$ is set as $\Omega =\left( -1,1 \right)$ a one-dimensional interval in the whole paper. Then (\ref{fx2})-(\ref{po1}) can be denoted as
\begin{align}\label{id1}
\begin{cases}
n_t = \partial_x (n_x - n \phi_x)\,, \\
p_t = \partial_x (p_x + p \phi_x)\,, \\
\varepsilon \phi_{xx} = n - p \,,
\end{cases}
\end{align}
for $x\in (-1,1), t>0$. For the boundary conditions of (\ref{id1}), we consider no-flux boundary conditions of $n$ and $p$ to describe the insulated domain boundaries, which are commonly used to study physical (biophysical) phenomena like  the electric double layer and the ion transport through channels. Besides, we use Robin type boundary condition of $\phi$ to represent the capacitance effect of physical systems (cf.~\cite{LMB, Mori1, Rolf2}) given as follows:
\begin{align}\label{id2}
\begin{cases}
n_x - n \phi_x = p_x + p \phi_x = 0,\quad \mbox{at} \quad x = \pm 1\,, \\
\phi + \gamma_\varepsilon \phi_x = \phi_0 (1) \hspace{2cm} \mbox{at} \quad x = 1\,, \\
\phi - \gamma_\varepsilon \phi_x = \phi_0 (-1) \hspace{1.7cm} \mbox{at} \quad x = -1\,,\\
\end{cases}
\end{align}
where $\phi_0 (1)$, $\phi_0 (-1)$ are constants, and $\gamma_\varepsilon> 0$ is a constant depending on $\varepsilon$. System (\ref{id1}) with (\ref{id2}) has the conservation of total charges of the individual ions $\int_{-1}^{1}{n\,dx}=A$, $\int_{-1}^{1}{p\,dx}=B$ for $t>0$, where $A$ and $B$ are positive constants (independent of $t$) representing total negative and positive charges, respectively. In most of the physical and biological systems, global electroneutrality holds true which means the total positive charge equal to the total negative charge. Consequently, we assume that $A=B=m_0>0$, i.e., global electroneutrality holds true in the whole paper.

System (\ref{id1}) with (\ref{id2}) has a steady state solution $\left( n,p,\phi  \right)=\left( {{n}^{0}},{{p}^{0}},\psi\right)$ denoted as
\begin{align}\label{id4-np0}
n^0 = \frac{m_0\,e^{\psi}}{\int_{-1}^{1} e^{\psi} dx}\,, \quad p^0 = \frac{m_0\,e^{-\psi}}{\int_{-1}^{1} e^{-\psi} dx}\,,
\end{align}
where $m_0>0$ is a constant and $\psi$ is the solution of the following equation, called charge-conserving Poisson-Boltzmann equation (cf.~\cite{WXLLS-N-14}), with Robin type boundary conditions:
\begin{align}\label{id3-p}
\begin{cases}
\varepsilon \psi_{xx} = \ds\frac{m_0\,e^{\psi}}{\int_{-1}^{1} e^{\psi} dx} - \ds\frac{m_0\,e^{-\psi}}{\int_{-1}^{1} e^{-\psi} dx} \quad \mbox{in} \quad (-1,1), \\
\\
\psi(\pm 1) \pm \gamma_\varepsilon \psi_x (\pm 1) = \phi_0 (\pm 1).
\end{cases}
\end{align}
Note that $n_{x}^{0}-{{n}^{0}}{{\psi }_{x}}=p_{x}^{0}+{{p}^{0}}{{\psi }_{x}}=0$ and $\varepsilon {{\psi }_{xx}}={{n}^{0}}-{{p}^{0}}$. Without loss of generality, we may assume ${{\phi }_{0}}(1)=-{{\phi }_{0}}(-1)>0$. From~\cite{LLHLL}, we get the following results of boundary layer solutions of (\ref{id3-p}):

\noindent {\bf Theorem~A.} (cf.~\cite{LLHLL}) Let $\psi$ be the solution of (\ref{id3-p}). Then $\psi$ is odd, i.e., $\psi(x)=-\psi(-x)$ for $x\in [-1,1]$, increasing in $(-1,1)$, convex in $(0,1)$, and concave in $(-1,0)$. Moreover, $\psi$ satisfies
\begin{enumerate} \item[(i)]~~~Interior Estimate:
$$\left| \psi \left( x \right) \right|\le {{\phi }_{0}}\left( 1 \right)\left( {{e}^{-\frac{M}{\sqrt{\varepsilon}}\left( 1+x \right)}}+{{e}^{-\frac{M}{\sqrt{\varepsilon}}\left( 1-x \right)}} \right)\text{ and }\underset{\varepsilon \to 0+}{\mathop{\lim }}\,\psi \left( x \right)=0\text{  for  }x\in \left( -1,1 \right)\,,$$
where $M=\sqrt{\frac{{{m}_{0}}}{2{{e}^{{{\phi }_{0}}\left( 1 \right)}}}}$ is a positive constant independent of $\varepsilon$.
\item[(ii)]~~Boundary Estimate: \\
If $0\leq\underset{\varepsilon \to 0+}{\mathop{\lim }}\,\frac{{{\gamma }_{\varepsilon }}}{\sqrt{\varepsilon }}=\gamma<\infty$, then $\underset{\varepsilon \to 0+}{\mathop{\lim }}\,\psi \left( 1 \right)={{\psi }^{*}}$ and $\underset{\varepsilon \to 0+}{\mathop{\lim }}\,\sqrt{\varepsilon }{\psi }'\left( 1 \right)=\sqrt{\alpha }\left( {{e}^{{}^{{{\psi }^{*}}}\!\!\diagup\!\!{}_{2}\;}}-{{e}^{{}^{-{{\psi }^{*}}}\!\!\diagup\!\!{}_{2}\;}} \right)$, \\ where $0<{{\psi }^{*}}\le {{\phi }_{0}}\left( 1 \right)$ is uniquely determined by ${{\phi }_{0}}\left( 1 \right)-{{\psi }^{*}}=\gamma \sqrt{\alpha }\left( {{e}^{{}^{{{\psi }^{*}}}\!\!\diagup\!\!{}_{2}\;}}-{{e}^{{}^{-{{\psi }^{*}}}\!\!\diagup\!\!{}_{2}\;}} \right)$. \\
\end{enumerate}
Theorem~A implies that as $\varepsilon$ goes to zero, $\psi$ have asymptotic behavior of boundary layer so we call $\left( n,p,\phi\right)=\left( {{n}^{0}},{{p}^{0}},\psi\right)$ as a boundary layer solution of system (\ref{id1}) with (\ref{id2}). Note that $({{n}^{0}},{{p}^{0}})$ is represented in (\ref{id4-np0}).

To get the stability of the boundary layer solution $\left( {{n}^{0}},{{p}^{0}},\psi\right)$ to system (\ref{id1}) with (\ref{id2}), we study the perturbed problem~(\ref{id9}) with~(\ref{id10}) which comes from the assumption that system (\ref{id1}) with (\ref{id2}) has solution
$$\left( n,p,\phi  \right)=\left( {{n}^{0}},{{p}^{0}},\psi \right)+\left( \tilde{n},\tilde{p},\tilde{\phi } \right)\,, $$
where $\left( \tilde{n},\tilde{p},\tilde{\phi } \right)$ satisfies the perturbed problem
\begin{align}\label{id9}
\begin{cases}
\tilde{n}_t = \tilde{n}_{xx} - (n^0 \tilde{\phi}_x)_x - (\tilde{n} \psi_x)_x - (\tilde{n} \tilde{\phi}_x)_x, \\
\tilde{p}_t = \tilde{p}_{xx} + (p^0 \tilde{\phi}_x)_x + (\tilde{p} \psi_x)_x + (\tilde{p} \tilde{\phi}_x)_x, \\
\varepsilon \tilde{\phi}_{xx} = \tilde{n} - \tilde{p}\,,\quad\hbox{ for }\quad x\in (-1,1)\,,\: t>0\,,
\end{cases}
\end{align}
with boundary conditions:
\begin{align}\label{id10}
\begin{cases}
\tilde{n}_x - n^0 \tilde{\phi}_x - \tilde{n} \psi_x - \tilde{n} \tilde{\phi}_x = \tilde{p}_x + p^0 \tilde{\phi}_x + \tilde{p} \psi_x + \tilde{p} \tilde{\phi}_x = 0 \quad \mbox{at} \quad x = \pm 1, \\
\tilde{\phi} \pm \gamma_\varepsilon \tilde{\phi}_x = 0 \quad \mbox{at} \quad x = \pm 1\,.
\end{cases}
\end{align}
Here $\tilde{n}$ and $\tilde{p}$ denote charge density perturbation of anions and cations, respectively. To let the global electroneutrality hold true, i.e., $\int_{-1}^{1}{ndx=A=B=\int_{-1}^{1}{pdx}}>0$, we need to have $\int_{-1}^{1}{\tilde{n}dx=\int_{-1}^{1}{\tilde{p}dx}}=0$, which comes from condition (\ref{id20}). Otherwise, if $A\neq B$ and $A,B>0$ independent of $\varepsilon$, then as $\varepsilon$ approaches zero, steady state solution of (\ref{id1}) with (\ref{id2}) becomes unbounded and far away from $\left( {{n}^{0}},{{p}^{0}},\psi \right)$ the boundary layer solution of (\ref{id1}) with (\ref{id2}) for the case of $A=B=m_0>0$ (cf.~\cite{LLHLL}). It seems impossible to get the stability of $\left( {{n}^{0}},{{p}^{0}},\psi \right)$ if condition (\ref{id20}) fails and $\int_{-1}^{1}{ndx=A\ne B=\int_{-1}^{1}{pdx}}$ ($A,B>0$ independent of $\varepsilon$) holds true. This motivates us to assume $A=B=m_0>0$ and (\ref{id20}) in the whole paper.

Conventionally, the stability of (\ref{id1}) with $\varepsilon=1$ and the Dirichlet boundary condition for $\phi$ holds true because of $\lim_{t\rightarrow\infty} \|\tilde{n}\|_{L^\infty_x} + \|\tilde{p}\|_{L^\infty_x} = 0$ (cf.~\cite{bhn-na94}) and the exponential decay estimate $\|\tilde{n}\|_{L^1_x} + \|\tilde{p}\|_{L^1_x} \leq Ce^{-\lambda t}$ (cf.~\cite{BD-ahp00}) for $t>0$, where the constant $C$ and exponent $\lambda$ are positive. Here we study the stability of (\ref{id1}) with $0<\varepsilon\ll 1$ a small parameter tending to zero and the Robin boundary condition for $\phi$ (see~(\ref{id2})). It seems difficult to get the exponential decay estimate in $L^1_x$-norm with exponent independent of $\varepsilon$. The main difficulty is that the profile of the solution $\psi$ has boundary layers near boundary points $x=\pm 1$, and $\psi_x$ blows up at boundary points $x=\pm 1$ with order $\varepsilon^{-1/2}$, i.e., $|\psi_x(\pm 1)|\sim \varepsilon^{-1/2}$ as $\varepsilon$ tends to zero (cf.~\cite{LLHLL}). Instead of the $L^1_x$-norm estimate, we prove the exponential decay estimate in $H^{-1}_x$-norm denoted as $\left\| {\tilde{n}} \right\|_{H_{x}^{-1}}^{2}+\left\| {\tilde{p}} \right\|_{H_{x}^{-1}}^{2}\le {{I}_{0}}{{e}^{-\alpha t}}$ for $t>0$, where $\alpha$ is a positive constant independent of $\varepsilon$, and $I_0$ is a positive constant depending on the $H^{-1}_x$-norm of initial data ${{\left. {\tilde{n}} \right|}_{t=0}}$ and ${{\left. {\tilde{p}} \right|}_{t=0}}$.

\subsection{Main Results}
\ \ \ \
To study system (\ref{id9}) with (\ref{id10}), we introduce the change of variables
\begin{align}\label{id11}
\tilde{\delta} = \tilde{n} - \tilde{p},\quad \tilde{\eta} = \tilde{n} + \tilde{p}\,,
\end{align}
where $\tilde{\delta}$ is the gap between concentrations of positive (cations) and negative (anions) ions. Note that  $\int_{-1}^{1}{\tilde{\delta }dx=0}$, i.e., $\int_{-1}^{1}{\tilde{n}dx=\int_{-1}^{1}{\tilde{p}dx}}$ means the total positive charge equal to the total negative charge, and then global electroneutrality holds true.
By (\ref{id11}), system (\ref{id9}) with (\ref{id10}) becomes
\begin{align}\label{id12}
\begin{cases}
\tilde{\delta}_t = \tilde{\delta}_{xx} - (\eta^0 \tilde{\phi}_x)_x - (\tilde{\eta} \psi_x)_x - (\tilde{\eta} \tilde{\phi}_x)_x, \\
\tilde{\eta}_t = \tilde{\eta}_{xx} - (\delta^0 \tilde{\phi}_x)_x - (\tilde{\delta} \psi_x)_x - (\tilde{\delta} \tilde{\phi}_x)_x, \\
\varepsilon \tilde{\phi}_{xx} = \tilde{\delta},
\end{cases}
\end{align}
with boundary conditions:
\begin{align}\label{id13}
\begin{cases}
\tilde{\delta}_x - \eta^0 \tilde{\phi}_x - \tilde{\eta} \psi_x - \tilde{\eta} \tilde{\phi}_x = \tilde{\eta}_x - \delta^0 \tilde{\phi}_x - \tilde{\delta} \psi_x - \tilde{\delta} \tilde{\phi}_x = 0 \quad \mbox{at} \quad x = \pm 1, \\
\tilde{\phi} \pm \gamma_\varepsilon \tilde{\phi}_x = 0 \quad \mbox{at} \quad x = \pm 1,
\end{cases}
\end{align}
where due to (\ref{id11}),
\begin{align}\label{id14}
\delta^0 = n^0 - p^0, \quad \eta^0 = n^0 + p^0\,,
\end{align}
and $\left(n^0,p^0\right)$ is defined in (\ref{id4-np0}). By (\ref{id4-np0}), (\ref{id3-p}) and (\ref{id14}), $\psi$ satisfies
\BE\label{pd1}
\varepsilon {{\psi}_{xx}}={{\delta }^{0}}\quad\hbox{ for }\quad x\in (-1,1)\,.
\EE

\noindent {\bf Linear Stability}
\medskip

To get linear stability, we consider the linearized problem of (\ref{id12}) with (\ref{id13}) as follows:
\begin{align}\label{id15}
\begin{cases}
\tilde{\delta}_t = \tilde{\delta}_{xx} - (\eta^0 \tilde{\phi}_x)_x - (\tilde{\eta} \psi_x)_x, \\
\tilde{\eta}_t = \tilde{\eta}_{xx} - (\delta^0 \tilde{\phi}_x)_x - (\tilde{\delta} \psi_x)_x, \\
\varepsilon \tilde{\phi}_{xx} = \tilde{\delta},
\end{cases}
\end{align}
with boundary conditions:
\begin{align}\label{id16}
\begin{cases}
\tilde{\delta}_x - \eta^0 \tilde{\phi}_x - \tilde{\eta} \psi_x = \tilde{\eta}_x - \delta^0 \tilde{\phi}_x - \tilde{\delta} \psi_x  = 0 \quad \mbox{at} \quad x = \pm 1, \\
\tilde{\phi} \pm \gamma_\varepsilon \tilde{\phi}_x = 0 \quad \mbox{at} \quad x = \pm 1\,.
\end{cases}
\end{align}

By (\ref{id15}) and (\ref{id16}), it is obvious that
\begin{align}\nonumber 
\frac{d}{dt} \int_{-1}^{1} \tilde{\delta} dx = \frac{d}{dt} \int_{-1}^{1} \tilde{\eta} dx = 0 \quad \mbox{for} \quad t > 0\,,
\end{align}
and then
\begin{align}
\int_{-1}^{1} \tilde{\delta} (x,t) dx &= \int_{-1}^{1} \tilde{\delta}_0(x) dx, \label{id18}\\
\int_{-1}^{1} \tilde{\eta} (x,t) dx &= \int_{-1}^{1} \tilde{\eta}_0(x) dx\,, \label{id19}
\end{align}
for all $t>0$, where $\tilde{\delta}_0(x)=\tilde{\delta}(x,0)$ and $\tilde{\eta}_0(x)=\tilde{\eta} (x,0)$ are the initial data. Let
\begin{align}\label{id21}
D(x,t) = \int_{-1}^{x} \tilde{\delta}(s,t) ds
\end{align}
and
\begin{align}\label{id22}
H(x,t) = \int_{-1}^{x} \tilde{\eta}(s,t) ds\,,
\end{align}
for $x\in (-1,1)$ and $t>0$. Then by (\ref{id18})-(\ref{id22}) the boundary conditions of $D$ and $H$ at $x=\pm 1$ become the zero Dirichlet boundary condition
\BE\label{0-DBC}
D=H=0\quad\hbox{ at }\quad x=\pm 1\,,\quad\hbox{ for }\quad t>0\,,
\EE
if the initial data $\tilde{\delta}_0(x)=\tilde{\delta}(x,0)$ and $\tilde{\eta}_0(x)=\tilde{\eta} (x,0)$ satisfy
\begin{align}
\int_{-1}^{1} \tilde{\delta}_0(x) dx = \int_{-1}^{1} \tilde{\eta}_0(x) dx = 0\,. \label{id20}
\end{align}
The physical meaning of (\ref{id20}) is: the total charge of solution $(n,p)=({{n}^{0}}+\tilde{n},{{p}^{0}}+\tilde{p})$ is same as that of $\left( {{n}^{0}},{{p}^{0}} \right)$ so the (global) electroneutrality $\int_{-1}^{1}{n\left( x,t \right)dx}=\int_{-1}^{1}{{{n}^{0}}\left( x \right)dx}=\int_{-1}^{1}{{{p}^{0}}\left( x \right)dx}=\int_{-1}^{1}{p\left( x,t \right)dx}$ holds true for $t>0$.


We use (\ref{id21}), (\ref{id22}), and integrate equations of (\ref{id15}) from $-1$ to $x$. Then we get
\begin{align}
D_t &= D_{xx} - \eta^0 \tilde{\phi}_x - \psi_x H_x\,, \label{id24} \\
H_t &= H_{xx} - \delta^0 \tilde{\phi}_x - \psi_x D_x\,, \label{id25}
\end{align}
which give the energy law of $(D,H)$ expressed as follows:
\begin{align}\label{eng-lw1}
\frac{1}{2} \frac{d}{dt} \int_{-1}^{1} (D^2 + H^2) dx &= -\int_{-1}^1 \Big(D_x^2 + H_x^2 + \frac{1}{\varepsilon} \eta^0 D^2\Big) dx - \tilde{\phi}_x (-1,t) \int_{-1}^1 (\eta^0 D + \delta^0 H) dx
\end{align}
for $t>0$ (see Theorem~\ref{thm1}). However, $\tilde{\phi}_x (-1,t) = -\frac{1}{2(1+\gamma_\varepsilon)\varepsilon} \int_{-1}^1 D dx$ (see (\ref{id37}) in Section~\ref{lzp}) and ${{\gamma }_{\varepsilon }}\sim \sqrt{\varepsilon }\gamma $ as $\varepsilon \to 0+$ (see Theorem~A) imply that $\left|\tilde{\phi}_x (-1,t)\right|$ becomes extremely large as $\varepsilon$ approaches zero if the integral $\int_{-1}^1 D dx$ is away from zero. This makes (\ref{eng-lw1}) hard to be used for the $L^2_x$ estimates of $D$ and $H$.

To overcome such difficulty, we use the following transformation:
\begin{eqnarray*}
\bar{D}(x,t) &=& D(x,t) - \frac{1}{2} \int_{-1}^1 D(x,t) dx\,, \\
\bar{H}(x,t) &=& H(x,t) - \frac{1}{2} \int_{-1}^1 H(x,t) dx\,,
\end{eqnarray*}
which can be denoted as $\bar{D}(x,t) = D(x,t) - d(t)$ and $\bar{H}(x,t) = H(x,t) - h(t)$,
where
\begin{align}
d(t) = \frac{1}{2} \int_{-1}^1 D(x,t) dx\,, \notag \\
h(t) = \frac{1}{2} \int_{-1}^1 H(x,t) dx\,. \notag
\end{align}
Then we prove the following result for (\ref{id24}) and (\ref{id25}).
\begin{thm}\label{prop1}
Suppose (\ref{id20}), $0 < \gamma_\varepsilon \leq \gamma_{max} < \infty$, and $\frac{\gamma_\varepsilon}{\sqrt{\varepsilon}} > \frac{\left((1+2\gamma_{max})^2+3\right)K_0\phi_0(1)}{4M}$ hold true, where $M=\sqrt{\frac{{{m}_{0}}}{2{{e}^{{{\phi }_{0}}\left( 1 \right)}}}}>0$, $K_0=\underset{0<\left| y \right|\le {{\phi }_{0}}\left( 1 \right)}{\mathop{\sup }}\,\frac{\left| {{e}^{y}}-1 \right|}{\left| y \right|}>0$ and $\gamma_{max}>0$ is a constant independent of $\varepsilon$. Then
\begin{align}\label{id47}
\frac{d}{dt}\left[\frac{1}{2}\int_{-1}^{1}{({{{\bar{D}}}^{2}}+{{{\bar{H}}}^{2}})}dx+{{d}^{2}}+{{h}^{2}}\right] \le -\int_{-1}^{1}{\left( \bar{D}_{x}^{2}+\frac{1}{2}\bar{H}_{x}^{2} \right)}
-\frac{1}{\varepsilon }{{m}_{0,\varepsilon }}\int_{-1}^{1}{{{{\bar{D}}}^{2}}}\,,
\end{align}
for $t>0$ and $0<\varepsilon<\tilde{\varepsilon }$, where ${{m}_{0,\varepsilon }}= m_0\left[ 1 - K_0\phi_0(1)\frac{\sqrt{\varepsilon}}{M} \right]> \frac{m_0}{2}$, and $\tilde{\varepsilon }$ depends only on $m_0$ and $\phi_0(1)$.
\end{thm}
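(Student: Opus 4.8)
The plan is to derive the energy identity for $(\bar D,\bar H)$ directly from the equations \eqref{id24}--\eqref{id25} and then to control every error term using the smallness/largeness hypotheses on $\gamma_\varepsilon/\sqrt\varepsilon$ together with Theorem~A. First I would note that $(\bar D,\bar H)$ satisfies the same PDEs as $(D,H)$ in the interior (since $d(t),h(t)$ are $x$-independent the $\partial_x$, $\partial_{xx}$ terms are unchanged), but now with the \emph{non-homogeneous} time-dependent contribution $\dot d(t),\dot h(t)$ on the left. Concretely, $\bar D_t+\dot d=\bar D_{xx}-\eta^0\tilde\phi_x-\psi_x\bar H_x$ and similarly for $\bar H$; crucially, by construction $\int_{-1}^1\bar D\,dx=\int_{-1}^1\bar H\,dx=0$ and $\bar D=-d$, $\bar H=-h$ at $x=\pm1$ (using \eqref{0-DBC}). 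The term $\tilde\phi_x(-1,t)=-\frac1{2(1+\gamma_\varepsilon)\varepsilon}\int_{-1}^1 D\,dx=-\frac{d(t)}{(1+\gamma_\varepsilon)\varepsilon}$ is now expressed cleanly in terms of $d(t)$, which is the whole point of the shift.

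Next I would multiply the $\bar D$-equation by $\bar D$ and the $\bar H$-equation by $\bar H$, integrate over $(-1,1)$, and add. Integration by parts in the $\bar D_{xx},\bar H_{xx}$ terms produces $-\int(\bar D_x^2+\bar H_x^2)$ plus boundary terms involving $\bar D_x(\pm1)\cdot(-d)$ etc.; I would also pick up $-\frac d{dt}(d^2+h^2)$-type contributions from moving $\dot d,\dot h$ around, which is exactly why the Lyapunov functional in \eqref{id47} carries the extra $d^2+h^2$. The term $-\int\eta^0\tilde\phi_x\bar D$: using $\varepsilon\tilde\phi_{xx}=\tilde\delta=\bar D_x$ (plus the constant from $d$), integrate by parts to extract the coercive term $-\frac1\varepsilon\int\eta^0\bar D^2$; since $\eta^0=n^0+p^0\to m_0/2+m_0/2$ flat in the interior but is perturbed near the boundary, I would use Theorem~A to write $\eta^0\ge m_0\big(1-K_0\phi_0(1)\sqrt\varepsilon/M\big)=m_{0,\varepsilon}$ pointwise (this is where $K_0$ and $M$ enter, via $|e^{\psi}-1|\le K_0|\psi|\le K_0\phi_0(1)\cdot(\text{exponential factor})$ and the interior estimate), giving the $-\frac1\varepsilon m_{0,\varepsilon}\int\bar D^2$ term. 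The cross term $-\int\psi_x(\bar H_x\bar D+\bar D_x\bar H)=-\int\psi_x(\bar D\bar H)_x=\int\psi_{xx}\bar D\bar H+[\text{bdry}]=\frac1\varepsilon\int\delta^0\bar D\bar H+[\text{bdry}]$, and $|\delta^0|=|n^0-p^0|\le \eta^0\cdot\frac{|e^\psi-e^{-\psi}|}{e^\psi+e^{-\psi}}$ is again small by Theorem~A; this term I would absorb, splitting via Young's inequality, half into the $\frac1\varepsilon m_{0,\varepsilon}\int\bar D^2$ reservoir and half into $\frac12\int\bar H_x^2$ — which is precisely why the coefficient of $\bar H_x^2$ in \eqref{id47} is $\frac12$ and not $1$.

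The main obstacle, and the step I expect to require the most care, is the bookkeeping of the boundary terms. All boundary terms involve $\bar D(\pm1)=-d$, $\bar H(\pm1)=-h$, and the fluxes $\bar D_x(\pm1),\bar H_x(\pm1)$, $\tilde\phi_x(\pm1)$, together with the values $\psi_x(\pm1)\sim\varepsilon^{-1/2}$ which blow up. I would use the Robin condition $\tilde\phi\pm\gamma_\varepsilon\tilde\phi_x=0$ at $x=\pm1$ to trade $\tilde\phi$ for $\gamma_\varepsilon\tilde\phi_x$, the no-flux boundary conditions from \eqref{id16} to eliminate $\bar D_x(\pm1),\bar H_x(\pm1)$ in favor of $\eta^0\tilde\phi_x+\tilde\eta\psi_x$ and $\delta^0\tilde\phi_x+\tilde\delta\psi_x$ at the boundary, and the identity $\tilde\phi_x(-1,t)=-d/((1+\gamma_\varepsilon)\varepsilon)$ to convert everything into expressions in $d,h$ and boundary values of $\tilde\eta,\tilde\delta$. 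Collecting, the net boundary contribution should be a quadratic form in $d,h$ (and boundary values) that is \emph{non-positive} exactly when $\gamma_\varepsilon/\sqrt\varepsilon$ exceeds the stated threshold $\frac{((1+2\gamma_{max})^2+3)K_0\phi_0(1)}{4M}$; the factors $(1+2\gamma_{max})^2$ and $3$ will emerge from estimating $|1+\gamma_\varepsilon|$-type denominators and from the three places ($\eta^0$, two copies of the cross term) where a $K_0\phi_0(1)\sqrt\varepsilon/M$ loss is incurred. I would fix $\tilde\varepsilon$ small enough (depending only on $m_0,\phi_0(1)$) to guarantee $m_{0,\varepsilon}>m_0/2$ and to make all the higher-order-in-$\sqrt\varepsilon$ remainders harmless, and then \eqref{id47} follows by combining the coercive terms with the non-positive boundary form.
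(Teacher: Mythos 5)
Your overall estimation strategy for the integral terms is on the right track (pointwise $\eta^0\ge m_{0,\varepsilon}$ via Theorem~A, integrating $\int\delta^0\bar H=-\varepsilon\int\psi_x\bar H_x$ by parts and using a Young split to land the $\tfrac12\bar H_x^2$ coefficient), but the structural first step has a real gap that the rest of the argument cannot repair.

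You propose to derive PDEs for $(\bar D,\bar H)$ directly (carrying the inhomogeneous $\dot d,\dot h$) and then multiply by $\bar D,\bar H$ and integrate. Two problems arise. First, since $\int_{-1}^1\bar D\,dx=\int_{-1}^1\bar H\,dx=0$, the $\dot d,\dot h$ contributions pair to zero against $\bar D,\bar H$; they do \emph{not} generate the $d^2+h^2$ in the Lyapunov functional, so your bookkeeping of where those terms come from is off. Second, and more seriously, the diffusion term now produces a non-vanishing boundary contribution
$\big[\bar D_x\bar D\big]_{-1}^{1}=-d\big(\bar D_x(1,t)-\bar D_x(-1,t)\big)=-d\big(\tilde\delta(1,t)-\tilde\delta(-1,t)\big)$,
since $\bar D(\pm1,t)=-d$ and $\bar D_x=D_x=\tilde\delta$. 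The no-flux boundary conditions in (\ref{id16}) control $\tilde\delta_x(\pm1,t)$ (equivalently $D_{xx}$ at the endpoints), \emph{not} $\tilde\delta(\pm1,t)$, and the Robin condition controls only $\tilde\phi,\tilde\phi_x$. So this boundary quadratic form in $d,h$ and boundary traces is not controllable by the quantities at hand, and the ``trade boundary fluxes via BCs'' step you sketch does not go through. The same issue appears in the $\bar H$-equation and in the cross term once you try to integrate $\psi_x(\bar D\bar H)_x$ by parts against the shifted variables.

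The way to avoid this entirely is to first establish the energy identity for $(D,H)$, where the zero Dirichlet boundary conditions $D(\pm1,t)=H(\pm1,t)=0$ make every boundary term vanish, and only afterwards pass to $(\bar D,\bar H,d,h)$ via the purely algebraic identities $\int D^2=\int\bar D^2+2d^2$ and $\int H^2=\int\bar H^2+2h^2$, which yield
$\frac{d}{dt}\Big[\tfrac12\int(\bar D^2+\bar H^2)+d^2+h^2\Big]=\tfrac12\frac{d}{dt}\int(D^2+H^2)$.
This is exactly where the $d^2+h^2$ in the functional originates. After substituting $\tilde\phi_x(-1,t)=-d/\big((1+\gamma_\varepsilon)\varepsilon\big)$ and expanding $D=\bar D+d$, $H=\bar H+h$ inside the integrals of the $(D,H)$-energy law, all the dangerous boundary-trace quantities have already been eliminated, and the remaining lower-order terms $\int\eta^0\bar D$, $\int\delta^0\bar H$ and the $d^2$-terms are precisely the ones your estimates (pointwise lower bound on $\eta^0$, $\int|\eta^0-m_0|\lesssim\sqrt\varepsilon$, $\int\psi_x^2\lesssim\varepsilon^{-1/2}$, Young's inequality) can handle under the stated threshold on $\gamma_\varepsilon/\sqrt\varepsilon$. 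With that replacement of your first step the rest of your plan closes.
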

\begin{remark}
Constant $K_0$ is defined by ${{K}_{0}}=\underset{0<\left| y \right|\le {{\phi }_{0}}\left( 1 \right)}{\mathop{\sup }}\,\frac{\left| {{e}^{y}}-1 \right|}{\left| y \right|}>0$ which depends only on ${{\phi }_{0}}\left( 1 \right)$ and approaches to one as ${{\phi }_{0}}\left( 1 \right)$ tends to zero. Hence for any $\gamma>0$, if $\displaystyle{\lim_{\varepsilon\rightarrow 0+}} \frac{\gamma_\varepsilon}{\sqrt{\varepsilon}} = \gamma$, the hypothesis $\frac{\gamma_\varepsilon}{\sqrt{\varepsilon}} > \frac{\left((1+2\gamma_{max})^2+3\right)K_0\phi_0(1)}{4M}$ can be fulfilled if $\phi_0(1)>0$ is sufficiently small or $m_0>0$ is sufficiently large for all small $\varepsilon$, where $M=\sqrt{\frac{{{m}_{0}}}{2{{e}^{{{\phi }_{0}}\left( 1 \right)}}}}>0$.
\end{remark}

From Theorem~\ref{prop1}, we obtain the following estimates.
\begin{cor}\label{cor1}
If $0 < \gamma_\varepsilon \leq \gamma_{max} < \infty$ and $\frac{\gamma_\varepsilon}{\sqrt{\varepsilon}} > \frac{\left((1+2\gamma_{max})^2+3\right)K_0\phi_0(1)}{4M}$, then there exists a positive constant $\alpha > 0$, independent of $\varepsilon$, such that
\begin{align}\label{id55}
\frac{1}{2}\int_{-1}^1 (\bar{D}^2 + \bar{H}^2) dx + d^2 + h^2 \leq I_0 e^{-\alpha t}
\end{align}
for $t>0$, where
\begin{align}\label{id56}
I_0 = \bigg[ \displaystyle\frac{1}{2}\int_{-1}^1 (\bar{D}^2 + \bar{H}^2) dx + d^2 + h^2 \bigg]_{t=0}.
\end{align}
\end{cor}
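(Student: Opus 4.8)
The plan is to convert the differential inequality (\ref{id47}) of Theorem~\ref{prop1} — which holds under the present hypotheses — into a Gr\"onwall inequality for the quantity
\[
\mathcal{E}(t) := \frac{1}{2}\int_{-1}^1(\bar{D}^2+\bar{H}^2)\,dx + d^2 + h^2,
\]
which is precisely the left-hand side of (\ref{id55}). First, since $m_{0,\varepsilon}>m_0/2>0$ and $\varepsilon>0$, the term $-\frac{1}{\varepsilon}m_{0,\varepsilon}\int_{-1}^1\bar{D}^2\,dx$ in (\ref{id47}) is nonpositive and may be discarded, leaving the $\varepsilon$-independent bound
\[
\mathcal{E}'(t)\ \le\ -\int_{-1}^1\Big(\bar{D}_x^2+\tfrac12\bar{H}_x^2\Big)\,dx\,,\qquad t>0\,.
\]
Keeping only these two terms is exactly what makes the eventual decay rate $\alpha$ independent of $\varepsilon$.

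Next I would record two elementary properties of $\bar{D}$ and $\bar{H}$. Because $\bar{D}=D-d$ with $d=\tfrac12\int_{-1}^1 D\,dx$, one has $\int_{-1}^1\bar{D}\,dx=0$, and similarly $\int_{-1}^1\bar{H}\,dx=0$; hence the Poincar\'e (Wirtinger) inequality on $(-1,1)$ yields $\int_{-1}^1\bar{D}_x^2\,dx\ge C_P\int_{-1}^1\bar{D}^2\,dx$ and $\int_{-1}^1\bar{H}_x^2\,dx\ge C_P\int_{-1}^1\bar{H}^2\,dx$, with $C_P=\pi^2/4$ depending only on the interval. Moreover, from (\ref{id21})--(\ref{id22}) we have $D(-1,t)=H(-1,t)=0$ while $\bar{D}_x=D_x$ and $\bar{H}_x=H_x$, so $|D(x,t)|=\big|\int_{-1}^x\bar{D}_x(s,t)\,ds\big|\le\sqrt{2}\,\big(\int_{-1}^1\bar{D}_x^2\,ds\big)^{1/2}$ by Cauchy--Schwarz; integrating in $x$ gives $d^2\le 2\int_{-1}^1\bar{D}_x^2\,dx$, and identically $h^2\le 2\int_{-1}^1\bar{H}_x^2\,dx$.

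Then I would combine these by splitting the dissipation: $\int_{-1}^1\bar{D}_x^2\,dx=\tfrac12\int_{-1}^1\bar{D}_x^2\,dx+\tfrac12\int_{-1}^1\bar{D}_x^2\,dx\ge \tfrac{C_P}{2}\int_{-1}^1\bar{D}^2\,dx+\tfrac14 d^2$, and $\tfrac12\int_{-1}^1\bar{H}_x^2\,dx=\tfrac14\int_{-1}^1\bar{H}_x^2\,dx+\tfrac14\int_{-1}^1\bar{H}_x^2\,dx\ge \tfrac{C_P}{4}\int_{-1}^1\bar{H}^2\,dx+\tfrac18 h^2$. Adding and recalling the definition of $\mathcal{E}$ gives $\int_{-1}^1(\bar{D}_x^2+\tfrac12\bar{H}_x^2)\,dx\ge \alpha\,\mathcal{E}(t)$ with $\alpha:=\min\{C_P,\,C_P/2,\,1/4,\,1/8\}=1/8$. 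Hence $\mathcal{E}'(t)\le -\alpha\,\mathcal{E}(t)$ for $t>0$, and Gr\"onwall's lemma yields $\mathcal{E}(t)\le \mathcal{E}(0)e^{-\alpha t}$, which is exactly (\ref{id55})--(\ref{id56}) with $\alpha=1/8$ independent of $\varepsilon$.

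The one step that is not entirely routine is the treatment of the $d^2+h^2$ part of $\mathcal{E}$, since those terms have no obvious dissipative counterpart in (\ref{id47}); the resolution is the trace-type bounds $d^2\le 2\int_{-1}^1\bar{D}_x^2\,dx$ and $h^2\le 2\int_{-1}^1\bar{H}_x^2\,dx$, which rely crucially on the normalizations $D(-1,t)=H(-1,t)=0$ built into the definitions (\ref{id21})--(\ref{id22}). The remaining ingredients — the mean-zero Poincar\'e inequality and the concluding Gr\"onwall argument — are standard, and one may replace $\pi^2/4$ by any admissible Poincar\'e constant without affecting the conclusion.
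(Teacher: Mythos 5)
Your proof is correct and follows essentially the same route as the paper's: discard the $O(\varepsilon^{-1})$ coercive term in (\ref{id47}), bound $\int\bar{D}^2$, $\int\bar{H}^2$, $d^2$, $h^2$ by the dissipative terms $\int\bar{D}_x^2$, $\int\bar{H}_x^2$ via Poincar\'e-type inequalities (you use a fundamental-theorem-of-calculus plus Cauchy--Schwarz bound for $d$, $h$, where the paper uses Poincar\'e with $D(\pm1)=H(\pm1)=0$ together with H\"older), and conclude by Gr\"onwall. Your version has the minor extra merit of making the rate explicit, $\alpha=1/8$.
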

\noindent Because of $D=\bar{D}+d$ and $H=\bar{H}+h$, Corollary~\ref{cor1} gives $\left\| D \right\|_{L_{x}^{2}}^{2}+\left\| H \right\|_{L_{x}^{2}}^{2}\le {{I}_{0}}{{e}^{-\alpha t}}$, i.e., $\left\| {\tilde{n}} \right\|_{H_{x}^{-1}}^{2}+\left\| {\tilde{p}} \right\|_{H_{x}^{-1}}^{2}\le {{I}_{0}}{{e}^{-\alpha t}}$ for $t>0$, which implies the linear stability of (\ref{id1}) with (\ref{id2}) in $H^{-1}_x$-norm, where $I_0$ is a positive constant depending only on the initial data. Here we have used the equivalence between ${{\left\| D \right\|}_{L_{x}^{2}}}+{{\left\| H \right\|}_{L_{x}^{2}}}$ and ${{\left\| {\tilde{n}} \right\|}_{H_{x}^{-1}}}+{{\left\| {\tilde{p}} \right\|}_{H_{x}^{-1}}}$ (see Appendix~I).

To get the $H^{-1}_x$-norm estimate, we firstly transform the linear part of the perturbed problem~(\ref{id9}) with~(\ref{id10}) (i.e., (\ref{id11}), (\ref{id21}) and (\ref{id22})) into a coupled system of linear parabolic equations of $(D,H)$ denoted as (\ref{id24}) and (\ref{id25}) with zero Dirichlet boundary condition (\ref{0-DBC}). To preserve the global electroneutrality, we assume that the total charge density perturbation is zero for anions and cations, i.e., the initial data satisfies (\ref{id20}), which implies boundary condition (\ref{0-DBC}). Then we find the associated energy law (\ref{eng-lw1}) (proved in Theorem~\ref{thm1}) but the coefficient of the last integral of (\ref{eng-lw1}) still blow up as $\varepsilon$ tends to zero if the integral $\int_{-1}^{1}{D\,dx}$ is away from zero (see (\ref{id37})). This motivates us to decompose $(D,H)$ into $\left( \bar{D},\bar{H} \right)$ and $(d,h)$, where $\bar{D}=D-d$, $\bar{H}=H-h$, $d=\frac{1}{2}\int_{-1}^{1}{D\,dx}$ and $h=\frac{1}{2}\int_{-1}^{1}{H\,dx}$. Then we derive (\ref{id42}) (see Theorem~\ref{thm2}) as the energy law of $\left( \bar{D},\bar{H}, d, h \right)$ to prove Theorem~\ref{prop1} and Corollary~\ref{cor1} which imply $\left\| D \right\|_{L_{x}^{2}}^{2}+\left\| H \right\|_{L_{x}^{2}}^{2}\le {{I}_{0}}{{e}^{-\alpha t}}$ and hence $\left\| {\tilde{n}} \right\|_{H_{x}^{-1}}^{2}+\left\| {\tilde{p}} \right\|_{H_{x}^{-1}}^{2}\le {{I}_{0}}{{e}^{-\alpha t}}$ for $t>0$, which gives the linear stability of (\ref{id1}) with (\ref{id2}) in $H^{-1}_x$-norm under global electroneutrality. Here $\alpha$ is a positive constant independent of $\varepsilon$ and $I_0$ is a positive constant depending on the $L^2_x$-norm of initial data ${{\left. D \right|}_{t=0}}$ and ${{\left. H \right|}_{t=0}}$, i.e., the $H^{-1}_x$-norm of initial data ${{\left. {\tilde{n}} \right|}_{t=0}}$ and ${{\left. {\tilde{p}} \right|}_{t=0}}$. Note that ${{\left\| D \right\|}_{L_{x}^{2}}}+{{\left\| H \right\|}_{L_{x}^{2}}}$ is equivalent to ${{\left\| {\tilde{n}} \right\|}_{H_{x}^{-1}}}+{{\left\| {\tilde{p}} \right\|}_{H_{x}^{-1}}}$ (see Appendix~I) and $\gamma_\varepsilon>0$ is assumed to have a suitable positive lower bound (see Theorem~\ref{prop1}), which makes the last three terms of (\ref{rv-eng1}) together become nonpositive so (\ref{id47}) holds true. Such an assumption of $\gamma_\varepsilon$ is also used to study nonlinear system (\ref{id12}) with boundary condition (\ref{id13}).
\medskip

\noindent {\bf Nonlinear Stability}
\medskip

For nonlinear stability, we may generalize the idea of linear stability to study $(\tilde{\delta},\tilde{\eta},\tilde{\phi})$ the solution of nonlinear system (\ref{id12}) with boundary condition (\ref{id13}). The main difficulty is to control the nonlinear terms $\tilde{\eta }{{\tilde{\phi }}_{x}}$ and $\tilde{\delta }{{\tilde{\phi }}_{x}}$ of system~(\ref{id12}). Here we assume that the initial data satisfies (\ref{nonlinear-I0}), which implies that the right side of (\ref{nl-stability}) becomes negative (see Theorem~\ref{nl-stability-thm}). Consequently, (\ref{nl-stability}) is useful to show (\ref{cor4-1-ineq}) (see Corollary~\ref{cor4-1}) and get the nonlinear stability of $\left( {{n}^{0}},{{p}^{0}},\psi\right)$ to system (\ref{id1}) with (\ref{id2}).

Now we state results for nonlinear stability as follows:

\begin{thm}\label{nl-stability-thm}
Under the same hypotheses as in Theorem~\ref{prop1}, suppose furthermore that the initial data ${{\left. \left( n,p \right) \right|}_{t=0}}=\left( {{n}_{0}},{{p}_{0}} \right)$ in $(-1,1)$ satisfies ${{n}_{0}},{{p}_{0}}\in {{L}^{2}}\left( -1,1 \right)$ and
$$
n_0(x)={{n}^{0}(x)}+\tilde{n}(x,0)\ge 0\,,\quad p_0(x)={{p}^{0}(x)}+\tilde{p}(x,0)\ge 0 \quad\hbox{ for }\: x\in (-1,1)\,.
$$ Then
\begin{align}\label{nl-stability}
\frac{d}{dt}\left[\frac{1}{2}\int_{-1}^{1}{({{{\bar{D}}}^{2}}+{{{\bar{H}}}^{2}})}dx+{{d}^{2}}+{{h}^{2}}\right] \leq -\int_{-1}^1 \bigg(\bar{D}_x^2 + \left(\frac{1}{2}-\frac{d^2}{m_0 \varepsilon}\right)\bar{H}_x^2\bigg)dx - \frac{1}{4\varepsilon} m_{0,\varepsilon}' \int_{-1}^1 \bar{D}^2 dx
\end{align}
for $t>0$ and $0<\varepsilon<\tilde{\varepsilon}'$, where $m_{0,\varepsilon}' = m_0 \left[1 - 2K_0\phi_0(1)\frac{\sqrt{\varepsilon}}{M}\right] > \frac{m_0}{2}$, and $\tilde{\varepsilon}'$ depends only on $m_0$ and $\phi_0(1)$. Moreover, if the initial data satisfies
\begin{align}\label{nonlinear-I0}
I_0 := \bigg[ \displaystyle\frac{1}{2}\int_{-1}^1 (\bar{D}^2 + \bar{H}^2) dx + d^2 + h^2 \bigg]_{t=0} < \frac{\theta m_0\varepsilon}{2}
\end{align}
for some $0 < \theta < 1$, then $\frac{1}{2}-\frac{d^2}{m_0 \varepsilon} > \frac{1}{2}(1-\theta) > 0$ for all $t \geq 0$.
\end{thm}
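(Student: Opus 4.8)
\medskip
\noindent\textbf{Plan of proof.} The idea is to follow the change of variables of the linear analysis and then dispose of the single extra quadratic term that appears. First I would pass to $D=\int_{-1}^x\tilde\delta\,ds$ and $H=\int_{-1}^x\tilde\eta\,ds$: integrating the first two equations of (\ref{id12}) from $-1$ to $x$, using the no-flux part of (\ref{id13}) to cancel the contributions at $x=-1$ and using $D(\pm1)=H(\pm1)=0$ (which hold because of (\ref{id20})), one gets exactly (\ref{id24})--(\ref{id25}) with the two extra quadratic terms $-\tilde\phi_xH_x$ and $-\tilde\phi_xD_x$. Since $\frac12\int_{-1}^1(\bar D^2+\bar H^2)\,dx+d^2+h^2=\frac12\int_{-1}^1(D^2+H^2)\,dx$, the left side of (\ref{nl-stability}) equals $\int_{-1}^1(DD_t+HH_t)\,dx$. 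The linear terms reproduce precisely the expression handled in the proof of Theorem~\ref{prop1} (using $\varepsilon\tilde\phi_{xx}=\tilde\delta=D_x$ and $\tilde\phi_x(-1,t)=-\frac{1}{2(1+\gamma_\varepsilon)\varepsilon}\int_{-1}^1D\,dx$), while the two quadratic terms contribute
\[
-\int_{-1}^1\tilde\phi_x(DH)_x\,dx=\int_{-1}^1\tilde\phi_{xx}DH\,dx=\frac1{2\varepsilon}\int_{-1}^1(D^2)_xH\,dx=-\frac1{2\varepsilon}\int_{-1}^1D^2H_x\,dx,
\]
all boundary terms vanishing since $D(\pm1)=0$. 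Writing $D=\bar D+d$, $H=\bar H+h$ (so $D_x=\bar D_x$, $H_x=\bar H_x$) and using $\int_{-1}^1\bar H_x\,dx=\bar H(1)-\bar H(-1)=0$ (recall $\bar D(\pm1)=-d$, $\bar H(\pm1)=-h$), this last contribution becomes $-\frac1{2\varepsilon}\int_{-1}^1\bar D^2\bar H_x\,dx-\frac d\varepsilon\int_{-1}^1\bar D\bar H_x\,dx$.

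Controlling these two terms is the heart of the proof. For the first I would use the a priori nonnegativity of the concentrations: under the hypothesis $n_0,p_0\ge0$, the maximum principle for the PNP system (\ref{id1}) with the no-flux boundary conditions in (\ref{id2}) gives $n(\cdot,t),p(\cdot,t)\ge0$ for all $t>0$, hence $\bar H_x=\tilde\eta=(n+p)-\eta^0\ge-\eta^0$; since $\bar D^2\ge0$ and $\eta^0>0$,
\[
-\frac1{2\varepsilon}\int_{-1}^1\bar D^2\bar H_x\,dx=\frac1{2\varepsilon}\int_{-1}^1\bar D^2(-\bar H_x)\,dx\le\frac1{2\varepsilon}\int_{-1}^1\eta^0\bar D^2\,dx,
\]
which uses up exactly one half of the linear dissipation term $-\frac1\varepsilon\int_{-1}^1\eta^0\bar D^2\,dx$. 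For the second, Young's inequality with the weight $2|d|/m_0$ gives
\[
\Big|\frac d\varepsilon\int_{-1}^1\bar D\bar H_x\,dx\Big|\le\frac{m_0}{4\varepsilon}\int_{-1}^1\bar D^2\,dx+\frac{d^2}{m_0\varepsilon}\int_{-1}^1\bar H_x^2\,dx,
\]
the second summand being exactly the term $\frac{d^2}{m_0\varepsilon}\bar H_x^2$ on the right of (\ref{nl-stability}).

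I would then reassemble. After these two bounds the $\bar D^2$-dissipation has become $-\frac1{2\varepsilon}\int\eta^0\bar D^2+\frac{m_0}{4\varepsilon}\int\bar D^2$; invoking $\eta^0>m_{0,\varepsilon}$ pointwise (which follows from $\eta^0=\frac{2m_0}{\int_{-1}^1e^\psi dx}\cosh\psi\ge\frac{m_0}{1+K_0\phi_0(1)\sqrt\varepsilon/M}\ge m_{0,\varepsilon}$, using the interior estimate of Theorem~A, the oddness of $\psi$, and the definition of $K_0$), I would write it as $-\frac{m_{0,\varepsilon}'}{4\varepsilon}\int\bar D^2-\frac1{2\varepsilon}\int(\eta^0-m_{0,\varepsilon})\bar D^2$: the first term is the one wanted in (\ref{nl-stability}) and the second is $\le0$. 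The residual half-excess $-\frac1{2\varepsilon}\int(\eta^0-m_{0,\varepsilon})\bar D^2$, one half of $-\int\bar H_x^2$, and the negative $d^2$-term $-\frac{2m_0\gamma_\varepsilon d^2}{(1+\gamma_\varepsilon)\varepsilon}$ produced by the linear part must then dominate the linear cross terms $-\frac{(1+2\gamma_\varepsilon)d}{(1+\gamma_\varepsilon)\varepsilon}\int\eta^0\bar D$ and $\frac d{(1+\gamma_\varepsilon)\varepsilon}\int\delta^0\bar H$; I would do this exactly as in the proof of Theorem~\ref{prop1} — completing the square in the $(\eta^0-m_{0,\varepsilon})$-weighted inner product for the first (using $\int_{-1}^1\bar D\,dx=0$ and $\int_{-1}^1(\eta^0-m_{0,\varepsilon})\,dx=2m_0K_0\phi_0(1)\sqrt\varepsilon/M$), Young's inequality into $\frac12\int\bar H_x^2$ for the second (using $\|\delta^0\|_{L^1}\le 2m_0K_0\phi_0(1)\sqrt\varepsilon/M$ from Theorem~A), and paying the two resulting $d^2$-debts out of the $\gamma_\varepsilon$-hypothesis, whose slack term $+3$ must now absorb both debts as well as the loss of half the excess dissipation. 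This yields (\ref{nl-stability}) with the constant degraded from $m_{0,\varepsilon}$ to $m_{0,\varepsilon}'=m_0[1-2K_0\phi_0(1)\sqrt\varepsilon/M]$ and the admissible range reduced to $(0,\tilde\varepsilon')$. This last step — rerunning the delicate linear cross-term bookkeeping with only about a quarter of the original $\bar D^2$-dissipation left, which is the price of taming the cubic term $-\frac1{2\varepsilon}\int D^2H_x$ via the structural inequality $\tilde\eta\ge-\eta^0$ and the weighted Young inequality above — is the step I expect to be the main obstacle, since the cubic term admits no $\varepsilon$-uniform interpolation bound and one must check that the same $\gamma_\varepsilon$-condition still closes the estimate.

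Finally, for the last assertion I would use a continuation argument. With $E(t)=\frac12\int_{-1}^1(\bar D^2+\bar H^2)\,dx+d^2+h^2$, so that $d(t)^2\le E(t)$ and $E(0)=I_0<\theta m_0\varepsilon/2$ by (\ref{nonlinear-I0}), set $T=\sup\{t\ge0:d(s)^2<\theta m_0\varepsilon/2\text{ for all }s\in[0,t]\}$, which is positive by continuity. If $T<\infty$ then $d(T)^2=\theta m_0\varepsilon/2$; but on $[0,T)$ one has $\frac12-\frac{d^2}{m_0\varepsilon}>0$, so the right side of (\ref{nl-stability}) is $\le0$ there, hence $E$ is nonincreasing on $[0,T)$ and $d(T)^2\le E(T)\le E(0)=I_0<\theta m_0\varepsilon/2$, a contradiction. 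Therefore $T=\infty$, i.e.\ $d(t)^2<\theta m_0\varepsilon/2$ for all $t\ge0$, so $\frac12-\frac{d^2}{m_0\varepsilon}>\frac12(1-\theta)>0$ for every $t\ge0$.
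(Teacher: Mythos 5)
Your proposal is correct and follows essentially the same route as the paper: integrate the nonlinear system to get the $(D,H)$ equations with the extra quadratic terms, observe that the cubic contribution reduces to $-\frac{1}{2\varepsilon}\int D^2 H_x\,dx$, split $D=\bar D+d$, $H=\bar H+h$, control $-\frac{1}{2\varepsilon}\int\bar D^2\bar H_x$ via the nonnegativity of $\eta=n+p$ (the paper packages this same step as combining $-\frac{1}{\varepsilon}\int\eta^0\bar D^2-\frac{1}{2\varepsilon}\int\tilde\eta\bar D^2=-\frac{1}{2\varepsilon}\int\eta^0\bar D^2-\frac{1}{2\varepsilon}\int\eta\bar D^2$ and dropping the nonpositive $\eta$-term, which is algebraically identical to your bound $\tilde\eta\geq-\eta^0$), absorb $-\frac{d}{\varepsilon}\int\bar D\bar H_x$ by the weighted Young's inequality with weight $2|d|/m_0$, and then rerun the linear cross-term bookkeeping to arrive at $m_{0,\varepsilon}'$ and the reduced $\tilde\varepsilon'$; the continuation argument you give for $d^2/(m_0\varepsilon)<\theta/2$ is the one in the paper's Appendix III. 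The only cosmetic difference is that you phrase the dissipation bound via the pointwise inequality $\eta^0\geq m_{0,\varepsilon}$ and a $(\eta^0-m_{0,\varepsilon})$-weighted Cauchy--Schwarz, whereas the paper uses an $|\eta^0-m_0|$-weighted Cauchy--Schwarz together with the identity $\eta^0-|\eta^0-m_0|=m_0+(\eta^0-m_0-|\eta^0-m_0|)$; both yield the same constant.
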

\noindent Besides, from Theorem~\ref{nl-stability-thm}, we get
\begin{cor}\label{cor4-1}
Under the same hypotheses of Theorem \ref{nl-stability-thm}, if (\ref{nonlinear-I0}) holds true, then there exists a positive constant $\alpha' > 0$, independent of $\varepsilon$, such that
\begin{align}\label{cor4-1-ineq}
\frac{1}{2}\int_{-1}^1 (\bar{D}^2 + \bar{H}^2) dx + d^2 + h^2 \leq I_0 e^{-\alpha' t}\,,
\end{align}
and
\begin{align}\label{cor4-1-ineq-1}
\left\| D \right\|_{L_{x}^{2}}^{2}+\left\| H \right\|_{L_{x}^{2}}^{2}\le {{I}_{0}}{{e}^{-\alpha' t}}
\end{align}
for $t>0$ and $0<\varepsilon<\tilde{\varepsilon}'$.
\end{cor}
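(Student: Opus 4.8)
The plan is to read off exponential decay of the energy functional $E(t):=\tfrac12\int_{-1}^1(\bar D^2+\bar H^2)\,dx+d^2+h^2$ from the differential inequality (\ref{nl-stability}) of Theorem~\ref{nl-stability-thm} by a Gronwall argument, the point being that the constant in the decay rate is independent of $\varepsilon$. First I would invoke Theorem~\ref{nl-stability-thm}: since the initial data satisfy (\ref{nonlinear-I0}) for some $0<\theta<1$, it provides $\tfrac12-\tfrac{d^2}{m_0\varepsilon}>\tfrac{1-\theta}{2}>0$ for all $t\ge 0$, and $m_{0,\varepsilon}'>\tfrac{m_0}{2}$. Dropping from the right-hand side of (\ref{nl-stability}) the manifestly nonpositive term $-\tfrac{1}{4\varepsilon}m_{0,\varepsilon}'\int_{-1}^1\bar D^2\,dx$ (whose coefficient blows up as $\varepsilon\to0$, so it cannot be used to produce a uniform rate) and using the lower bound on $\tfrac12-\tfrac{d^2}{m_0\varepsilon}$, I obtain, for $t>0$ and $0<\varepsilon<\tilde\varepsilon'$,
\[
\frac{d}{dt}E(t)\le-\int_{-1}^1\bar D_x^2\,dx-\frac{1-\theta}{2}\int_{-1}^1\bar H_x^2\,dx .
\]

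The key step is to bound $E(t)$ above by the right-hand side just displayed, with a constant depending only on the fixed interval $(-1,1)$ and on $\theta$. Two facts do this. Since $\bar D=D-d$ and $\bar H=H-h$ have zero mean on $(-1,1)$ by the definition of $d,h$, the Poincar\'e--Wirtinger inequality gives $\int_{-1}^1\bar D^2\,dx\le\tfrac{4}{\pi^2}\int_{-1}^1\bar D_x^2\,dx$ and the analogous bound for $\bar H$. Moreover, because (\ref{id20}) is assumed and the total charge perturbations $\int_{-1}^1\tilde\delta\,dx$, $\int_{-1}^1\tilde\eta\,dx$ are conserved by (\ref{id12})--(\ref{id13}), the zero Dirichlet condition (\ref{0-DBC}) holds, so $\bar D(\pm1,t)=-d(t)$ and $\bar H(\pm1,t)=-h(t)$; writing $\bar D(1)=\bar D(x)+\int_x^1\bar D_x$, squaring, and integrating in $x$ over $(-1,1)$ yields the trace bound $d^2=|\bar D(1)|^2\le\int_{-1}^1\bar D^2\,dx+4\int_{-1}^1\bar D_x^2\,dx$, and likewise for $h^2$. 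Combining, $E(t)\le C_\ast\big(\int_{-1}^1\bar D_x^2\,dx+\int_{-1}^1\bar H_x^2\,dx\big)$ with $C_\ast=4+\tfrac{6}{\pi^2}$, independent of $\varepsilon$ and $\theta$. Substituting into the displayed inequality and using $\tfrac{1-\theta}{2}<1$ gives $\tfrac{d}{dt}E(t)\le-\alpha'E(t)$ with $\alpha':=\tfrac{1-\theta}{2C_\ast}>0$, independent of $\varepsilon$. Gronwall's lemma then gives $E(t)\le E(0)e^{-\alpha't}=I_0e^{-\alpha't}$ for $t>0$ and $0<\varepsilon<\tilde\varepsilon'$, which is (\ref{cor4-1-ineq}). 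For (\ref{cor4-1-ineq-1}) I expand $D=\bar D+d$, $H=\bar H+h$ and use $\int_{-1}^1\bar D\,dx=\int_{-1}^1\bar H\,dx=0$ to get $\|D\|_{L^2_x}^2+\|H\|_{L^2_x}^2=\int_{-1}^1(\bar D^2+\bar H^2)\,dx+2d^2+2h^2=2E(t)$, so (\ref{cor4-1-ineq-1}) follows from (\ref{cor4-1-ineq}) up to the obvious harmless factor.

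The only delicate point is the control of the mean modes $d(t)$ and $h(t)$ in the second step. The $L^2$ dissipation $-\tfrac{1}{4\varepsilon}m_{0,\varepsilon}'\int_{-1}^1\bar D^2\,dx$ must be discarded because its coefficient degenerates as $\varepsilon\to0$, so the decay of $d^2$ and $h^2$ has to be extracted entirely from $\int\bar D_x^2$ and $\int\bar H_x^2$. This closes only because $D$ and $H$ vanish at $x=\pm1$, which pins $\bar D(\pm1)=-d$ and $\bar H(\pm1)=-h$ and lets the trace inequality on the $\varepsilon$-independent interval $(-1,1)$ do the work; this is precisely what keeps the rate $\alpha'$ uniform in $\varepsilon$. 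The remaining ingredients — the Poincar\'e inequality, the expansion of the $L^2$ norms, and the Gronwall step — are routine.
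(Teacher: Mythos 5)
Your proof is correct and follows essentially the same route as the paper: drop the $\varepsilon$-degenerate $\int\bar D^2$ dissipation, use the uniform lower bound $\tfrac12-\tfrac{d^2}{m_0\varepsilon}>\tfrac{1-\theta}{2}$ from Theorem~\ref{nl-stability-thm}, control $E(t)$ by $\int(\bar D_x^2+\bar H_x^2)$ with $\varepsilon$-independent Poincar\'e-type constants, and close with Gronwall. The only minor difference is cosmetic: where you control $d^2,h^2$ via the trace $\bar D(\pm1)=-d$, $\bar H(\pm1)=-h$ and a pointwise-plus-integral estimate, the paper instead applies Poincar\'e to $D,H$ (zero Dirichlet data) and H\"older on $d=\tfrac12\int D$ --- both yield the same $|d|\lesssim\|\bar D_x\|_{L^2}$, and your observation that $\|D\|_{L^2}^2+\|H\|_{L^2}^2=2E(t)$ correctly identifies (and is no worse than the paper's own silence on) the harmless factor of~$2$ in passing to (\ref{cor4-1-ineq-1}).
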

\noindent Due to $D=\bar{D}+d$ and $H=\bar{H}+h$, (\ref{cor4-1-ineq}) may imply (\ref{cor4-1-ineq-1}) and show that the upper bound of $\left\| D \right\|_{L_{x}^{2}}^{2}+\left\| H \right\|_{L_{x}^{2}}^{2}$ being equivalent to~${{\left\| {\tilde{n}} \right\|}_{H_{x}^{-1}}}+{{\left\| {\tilde{p}} \right\|}_{H_{x}^{-1}}}$ (see Appendix~I) approaches zero exponentially with exponent independent of $\varepsilon$ as $t$ goes to infinity. This represents the exponential decay estimate (to~$\varepsilon$) of ${{\left\| {\tilde{n}} \right\|}_{H_{x}^{-1}}}+{{\left\| {\tilde{p}} \right\|}_{H_{x}^{-1}}}$ and gives the nonlinear stability of (\ref{id1}) with (\ref{id2}) in $H^{-1}_x$ norm.

For nonlinear stability, we use the same idea of linear stability to study $(\tilde{\delta},\tilde{\eta},\tilde{\phi})$ the solution of nonlinear system (\ref{id12}) with boundary condition (\ref{id13}). The main difficulty is to control the extra nonlinear terms $\tilde{\eta }{{\tilde{\phi }}_{x}}$ and $\tilde{\delta }{{\tilde{\phi }}_{x}}$ of system~(\ref{id12}). Here we assume that the initial data ${{\left. \left( n,p \right) \right|}_{t=0}}=\left( {{n}_{0}},{{p}_{0}} \right)$ in $(-1,1)$ satisfies ${{n}_{0}},{{p}_{0}}\in {{L}^{2}}\left( -1,1 \right)$, $n_0(x)={{n}^{0}(x)}+\tilde{n}(x,0)\ge 0$, $p_0(x)={{p}^{0}(x)}+\tilde{p}(x,0)\ge 0$ for $x\in (-1,1)$ and (\ref{nonlinear-I0}) which expresses the smallness of ${{\left\| {\tilde{n}} \right\|}_{H_{x}^{-1}}}+{{\left\| {\tilde{p}} \right\|}_{H_{x}^{-1}}}$ at $t=0$. Then we use (\ref{id74}) the energy law of $\left( \bar{D},\bar{H}, d, h \right)$ to show $\left\| D \right\|_{L_{x}^{2}}^{2}+\left\| H \right\|_{L_{x}^{2}}^{2}\le {{I}_{0}}{{e}^{-\alpha^\prime t}}$ for $t>0$ (see Theorem~\ref{nl-stability-thm} and Corollary~\ref{cor4-1}), i.e., $\left\| {\tilde{n}} \right\|_{H_{x}^{-1}}^{2}+\left\| {\tilde{p}} \right\|_{H_{x}^{-1}}^{2}\le {{I}_{0}}{{e}^{-\alpha^\prime t}}$ for $t>0$, where $\alpha^\prime$ is a positive constant independent of $\varepsilon$, and constant $I_0>0$ comes from the $L^2_x$-norm of initial data ${{\left. D \right|}_{t=0}}$ and ${{\left. H \right|}_{t=0}}$, i.e., the $H^{-1}_x$-norm of initial data ${{\left. {\tilde{n}} \right|}_{t=0}}$ and ${{\left. {\tilde{p}} \right|}_{t=0}}$ satisfying (\ref{nonlinear-I0}). Note that condition $n_0(x), p_0(x)\geq 0$ for $x\in (-1,1)$ implies $n(x,t), p(x,t)\geq 0$ (see Proposition~\ref{rmk_eta}) and $\eta(x,t)=n(x,t)+p(x,t)\geq 0$ for $x\in (-1,1), t>0$, which implies $\int_{-1}^{1}{\eta {{{\bar{D}}}^{2}}dx\ge 0}$ a crucial inequality for the use of (\ref{id74}) to prove Theorem~\ref{nl-stability-thm}. In physical point of view, the nonnegativeness of $n_0$ and $p_0$ is reasonable because $n_0$ and $p_0$ are concentrations of anions and cations, respectively, at the initial time $t=0$.

The rest of this paper is organized as follows: For linear stability, we prove Theorem~\ref{prop1} and Corollary~\ref{cor1} in Section~\ref{lzp}. In Section~\ref{nl-stab}, the proofs of Theorem~\ref{nl-stability-thm} and Corollary~\ref{cor4-1} are provided for nonlinear stability.

\section{Proof of linear stability}\label{lzp}
\ \ \ \ In this section, we study (\ref{id15}) with (\ref{id16}), which is the linearized problem of (\ref{id12}) with (\ref{id13}). We derive the energy law of $(D,H)$ as follows:
\begin{thm}\label{thm1}
Let $(\tilde{\delta},\tilde{\eta},\tilde{\phi})$ be the solution of
(\ref{id15}) with boundary conditions (\ref{id16}). If (\ref{id20}) holds true,
then $(D,H)$ satisfies
\begin{align}
\frac{1}{2} \frac{d}{dt} \int_{-1}^{1} (D^2 + H^2) dx &= -\int_{-1}^1 \Big(D_x^2 + H_x^2 + \frac{1}{\varepsilon} \eta^0 D^2\Big) dx - \tilde{\phi}_x (-1,t) \int_{-1}^1 (\eta^0 D + \delta^0 H) dx \label{id23}
\end{align}
for $t>0$.
\end{thm}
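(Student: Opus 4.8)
The plan is a standard energy estimate. Multiply (\ref{id24}) by $D$ and (\ref{id25}) by $H$, integrate over $(-1,1)$, add the two identities, and integrate by parts, using that hypothesis (\ref{id20}) forces the zero Dirichlet boundary conditions (\ref{0-DBC}), i.e. $D(\pm 1,t)=H(\pm 1,t)=0$ for $t>0$.

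First I would write
\[
\frac{1}{2}\frac{d}{dt}\int_{-1}^1 (D^2+H^2)\,dx = \int_{-1}^1 \big(D\,D_t + H\,H_t\big)\,dx
\]
and substitute (\ref{id24})--(\ref{id25}). The diffusion terms integrate by parts with no boundary contribution, $\int_{-1}^1 D\,D_{xx}\,dx = -\int_{-1}^1 D_x^2\,dx$ and likewise for $H$, since $D=H=0$ at $x=\pm 1$. This yields
\[
\frac{1}{2}\frac{d}{dt}\int_{-1}^1 (D^2+H^2)\,dx = -\int_{-1}^1 (D_x^2+H_x^2)\,dx - \int_{-1}^1 \tilde{\phi}_x\,(\eta^0 D+\delta^0 H)\,dx - \int_{-1}^1 \psi_x\big(D\,H_x + H\,D_x\big)\,dx .
\]

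Next I would treat the last two terms. For the cross term, observe $D\,H_x+H\,D_x=(DH)_x$, so integration by parts together with $D=H=0$ at $x=\pm 1$ and the identity $\varepsilon\psi_{xx}=\delta^0$ from (\ref{pd1}) gives
\[
-\int_{-1}^1 \psi_x (DH)_x\,dx = \int_{-1}^1 \psi_{xx}\,DH\,dx = \frac{1}{\varepsilon}\int_{-1}^1 \delta^0\,DH\,dx .
\]
For the electrostatic term, integrating the Poisson equation $\varepsilon\tilde{\phi}_{xx}=\tilde{\delta}=D_x$ from $-1$ to $x$ and using $D(-1,t)=0$ produces $\tilde{\phi}_x(x,t)=\tilde{\phi}_x(-1,t)+\frac{1}{\varepsilon}D(x,t)$, whence
\[
-\int_{-1}^1 \tilde{\phi}_x\,(\eta^0 D+\delta^0 H)\,dx = -\tilde{\phi}_x(-1,t)\int_{-1}^1 (\eta^0 D+\delta^0 H)\,dx - \frac{1}{\varepsilon}\int_{-1}^1 \eta^0 D^2\,dx - \frac{1}{\varepsilon}\int_{-1}^1 \delta^0\,DH\,dx .
\]

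Adding the two contributions, the terms $\pm\frac{1}{\varepsilon}\int_{-1}^1 \delta^0\,DH\,dx$ cancel exactly, leaving precisely (\ref{id23}). There is no real obstacle here; the only points requiring attention are the pairing of the $\psi_x$ cross term into $(DH)_x$ and the substitution $\tilde{\phi}_x=\tilde{\phi}_x(-1,t)+\frac{1}{\varepsilon}D$, after which the cancellation of the $\delta^0 DH$ terms is exactly what yields the clean identity. (If one wishes to be careful about regularity, the manipulations should first be carried out for smooth solutions and then extended by density, but at the level of a formal energy law this is not the crux.)
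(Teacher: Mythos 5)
Your proof is correct and follows essentially the same route as the paper: multiply (\ref{id24}) by $D$ and (\ref{id25}) by $H$, integrate by parts using $D=H=0$ at $x=\pm1$, substitute $\tilde{\phi}_x=\tilde{\phi}_x(-1,t)+\tfrac1\varepsilon D$, and invoke $\varepsilon\psi_{xx}=\delta^0$ to arrange the cancellation. The only cosmetic difference is bookkeeping: you combine the two $\psi_x$ cross terms into $\psi_x(DH)_x$ and cancel the resulting $\tfrac1\varepsilon\int\delta^0 DH$ against the electrostatic contribution, whereas the paper instead groups $\delta^0\tilde{\phi}_x H+\psi_x D_x H=\varepsilon(\psi_x\tilde{\phi}_x)_x H$ and cancels $-\int\psi_x DH_x$ against $-\int\psi_x H_x D$; both are the same identity organized differently.
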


\noindent\begin{proof}
For equation (\ref{id24}), we multiply it by $D$, integrate it from $-1$ to $1$, and use
integration by parts. Then
\begin{align}\label{id26}
\frac{1}{2} \frac{d}{dt} \int_{-1}^{1} D^2 dx = -\int_{-1}^1 \bigg(D_x^2 + \eta^0 \tilde{\phi}_x D + \psi_x H_x D\bigg) dx.
\end{align}
Here we have used the fact that $D=0$ at $x=\pm 1$ from (\ref{0-DBC}). On the other hand, we integrate $\varepsilon \tilde{\phi}_{xx} = \tilde{\delta}$ the Poisson
equation of (\ref{id15}) from $-1$ to $x$. Then
\begin{align*}
\varepsilon (\tilde{\phi}_x (x,t) - \tilde{\phi}_x (-1,t)) &= \varepsilon \int_{-1}^x \tilde{\phi}_{yy} (y,t) dy \\
&= \int_{-1}^x \tilde{\delta} (y,t) dy \\
&= D(x,t),
\end{align*}
which gives
\begin{align}\label{id27}
\tilde{\phi}_x (x,t) = \frac{1}{\varepsilon} D(x,t) + \tilde{\phi}_x (-1,t)
\end{align}
for $x \in (-1,1)$, $t>0$. Consequently,
\begin{align}\label{id28}
\int_{-1}^1 \eta^0 \tilde{\phi}_x D dx = \frac{1}{\varepsilon} \int_{-1}^1 \eta^0 D^2 dx + \tilde{\phi}_x (-1,t) \int_{-1}^1 \eta^0 D dx.
\end{align}

For equation (\ref{id25}), we multiply it by $H$, integrate it
from $-1$ to $1$, and use integration by parts. Then
\begin{align}\label{id29}
\frac{1}{2} \frac{d}{dt} \int_{-1}^{1} H^2 dx = -\int_{-1}^1 \bigg(H_x^2 + \delta^0 \tilde{\phi}_x H + \psi_x D_x H \bigg) dx.
\end{align}
Here we have used the fact that $H=0$ at $x=\pm 1$ from (\ref{0-DBC}). Moreover, we use (\ref{pd1}), (\ref {id27}) and integration by parts to get
\begin{align}
\quad \int_{-1}^1 (\delta^0 \tilde{\phi}_x H + \psi_x D_x H) dx &= \int_{-1}^1 \Big(\varepsilon \psi_{xx} \tilde{\phi}_x + \varepsilon\psi_x \tilde{\phi}_{xx}\Big)H dx \notag\\
&= \int_{-1}^1 \varepsilon (\psi_x \tilde{\phi}_x)_x H dx \notag\\
&= -\int_{-1}^1 \varepsilon \psi_x \tilde{\phi}_x H_x dx \notag \\
&= -\int_{-1}^1 \psi_x D H_x dx - \varepsilon\tilde{\phi}_x (-1,t) \int_{-1}^1 \psi_x H_x dx \notag\\
&= -\int_{-1}^1 \psi_x D H_x dx + \tilde{\phi}_x (-1,t) \int_{-1}^1 \delta^0 H dx\,.  \label{id30}
\end{align}
Therefore, the proof of (\ref{id23}), i.e., Theorem~\ref{thm1} is complete by combining (\ref{id26}), (\ref{id28}), (\ref{id29}) and (\ref{id30}).
\end{proof}

In order to use Theorem~\ref{thm1} for the proof of the linear stability of $(n^0,p^0,\psi) $, we need to consider $\tilde{\phi}_x (-1,t)$ the gradient estimate at the boundary point $x=-1$. Notice that $\tilde{\phi}$ satisfies
\begin{align}\label{id31}
\varepsilon \tilde{\phi}_{xx} = \tilde{\delta}\quad\hbox{ for }\quad x\in (-1,1)\,,
\end{align}
with Robin boundary condition
\begin{align}\label{id32}
\tilde{\phi} \pm \gamma_\varepsilon \tilde{\phi}_x = 0 \quad \mbox{at} \quad x = \pm 1,
\end{align}
for each $t > 0$. Fix $t>0$ arbitrarily. Then we integrate both sides of equation (\ref{id31}) in $x$ over the interval $(-1,1)$, and get
\begin{align*}
\varepsilon (\tilde{\phi}_x (1,t) - \tilde{\phi}_x (-1,t)) = \varepsilon \int_{-1}^1 \tilde{\phi}_{xx} dx =  \int_{-1}^1 \tilde{\delta} dx = D(1,t) = 0\,,
\end{align*}
which implies
\begin{align}\label{id33}
\tilde{\phi}_x (1,t) = \tilde{\phi}_x (-1,t)\,.
\end{align}
Here we have used (\ref{id21}) and (\ref{0-DBC}). Thus (\ref{id32}) and (\ref{id33}) give
\begin{align}\label{id34}
\tilde{\phi} (1,t) = -\gamma_\varepsilon \tilde{\phi}_x (1,t) = -\gamma_\varepsilon \tilde{\phi}_x (-1,t) = -\tilde{\phi} (-1,t).
\end{align}
By (\ref{id33}) and (\ref{id34}), we have
\begin{align}
2\tilde{\phi}_x (-1,t) &= \int_{-1}^1 (x\tilde{\phi}_x)_x dx \notag\\
&= \int_{-1}^1 (\tilde{\phi}_x + x\tilde{\phi}_{xx}) dx \notag\\
&= -2\tilde{\phi} (-1,t) + \int_{-1}^1 x\tilde{\phi}_{xx} dx \notag\\
&= -2\gamma_\varepsilon \tilde{\phi}_x (-1,t) + \int_{-1}^1 x\tilde{\phi}_{xx} dx\,. \label{id35}
\end{align}
Furthermore, we use (\ref{id21}) (which gives ${{D}_{x}}=\tilde{\delta }$), (\ref{id31}) and integration by parts to get
\begin{align}
\int_{-1}^1 x\tilde{\phi}_{xx} dx &= \frac{1}{\varepsilon} \int_{-1}^1 x\tilde{\delta} dx \notag\\
&= \frac{1}{\varepsilon} \int_{-1}^1 x D_x dx \notag \\
&= -\frac{1}{\varepsilon} \int_{-1}^1 D dx\,. \label{id36}
\end{align}
Consequently, (\ref{id35}) and (\ref{id36}) imply
\begin{align}\label{id37}
\tilde{\phi}_x (-1,t) = -\frac{1}{2(1+\gamma_\varepsilon)\varepsilon} \int_{-1}^1 D dx\,.
\end{align}
Note that ${{\gamma }_{\varepsilon }}\sim \sqrt{\varepsilon }\gamma $ as $\varepsilon \to 0+$ (see Theorem~A). Thus
as $\varepsilon$ approaches zero, $\left|\tilde{\phi}_x (-1,t)\right|$ becomes extremely large if the integral $\int_{-1}^1 D dx$ is away from zero. This makes (\ref{id23}) hard to be used for the proof of the linear stability of $(n^0,p^0,\psi)$.

To overcome the difficulty, we transform $D$ and $H$ into $\bar{D}$ and $\bar{H}$ by truncating the average of $D$ and $H$, respectively:
\begin{align}
\bar{D} = D - \frac{1}{2} \int_{-1}^1 D dx, \label{id38}\\
\bar{H} = H - \frac{1}{2} \int_{-1}^1 H dx, \label{id39}
\end{align}
and
\begin{align}
d = \frac{1}{2} \int_{-1}^1 D dx\,, \label{id40}\\
h = \frac{1}{2} \int_{-1}^1 H dx\,, \label{id41}
\end{align}
where $d$ and $h$ are the average of $D$ and $H$ at time $t$, respectively. Note that $\bar{D} = D - d$ and $\bar{H} = H - h$ satisfy
\BE\label{av-DH1}
\int_{-1}^{1}{\bar{D}dx=\int_{-1}^{1}{\bar{H}dx=0}}\,,
\EE
${{\bar{D}}_{x}}={{D}_{x}}=\tilde{\delta }$ and ${{\bar{H}}_{x}}={{H}_{x}}=\tilde{\eta }$ for $x\in (-1,1)$ and $t>0$.

Now we state the energy law for $(\bar{D},\bar{H},d,h)$ as follows:
\begin{thm}\label{thm2}
Under the same assumption as Theorem \ref{thm1}, we have
\begin{align}
&\quad\ \frac{d}{dt} \bigg[ \frac{1}{2}\int_{-1}^1 (\bar{D}^2 + \bar{H}^2) dx + d^2 + h^2 \bigg] \label{id42}\\
&= -\int_{-1}^1 \Big(\bar{D}_x^2 + \bar{H}_x^2 + \frac{1}{\varepsilon} \eta^0 \bar{D}^2 \Big)dx - \frac{2m_0 \gamma_\varepsilon d^2}{(1+\gamma_\varepsilon)\varepsilon} \notag\\
&\quad\quad- \frac{(1+2\gamma_\varepsilon)d}{(1+\gamma_\varepsilon)\varepsilon} \int_{-1}^1 \eta^0 \bar{D} dx + \frac{d}{(1+\gamma_\varepsilon)\varepsilon} \int_{-1}^1 \delta^0 \bar{H} dx \notag
\end{align}
for $t>0$.
\end{thm}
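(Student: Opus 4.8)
The plan is to obtain (\ref{id42}) directly from the energy identity (\ref{id23}) of Theorem~\ref{thm1} by inserting the decomposition $D=\bar D+d$, $H=\bar H+h$ of (\ref{id38})--(\ref{id41}) and simplifying the boundary term with (\ref{id37}); essentially no new analysis is needed beyond bookkeeping. First I would record the elementary identities that are used throughout. Since the interval $(-1,1)$ has length $2$ and $\int_{-1}^1\bar D\,dx=\int_{-1}^1\bar H\,dx=0$ by (\ref{av-DH1}), one has $\frac12\int_{-1}^1(D^2+H^2)\,dx=\frac12\int_{-1}^1(\bar D^2+\bar H^2)\,dx+d^2+h^2$ pointwise in $t$, so the left-hand side of (\ref{id23}) is already the left-hand side of (\ref{id42}); and $\bar D_x=D_x$, $\bar H_x=H_x$, so the Dirichlet terms $-\int_{-1}^1(D_x^2+H_x^2)\,dx$ pass through unchanged. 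From (\ref{id4-np0}) together with $A=B=m_0$ we also get the moment identities $\int_{-1}^1\eta^0\,dx=\int_{-1}^1(n^0+p^0)\,dx=2m_0$ and $\int_{-1}^1\delta^0\,dx=\int_{-1}^1(n^0-p^0)\,dx=0$.

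Next I would rewrite the three remaining terms of (\ref{id23}) in the new variables. Expanding $\eta^0 D^2=\eta^0(\bar D+d)^2$ and using $\int_{-1}^1\eta^0\,dx=2m_0$ gives $\int_{-1}^1\eta^0 D^2\,dx=\int_{-1}^1\eta^0\bar D^2\,dx+2d\int_{-1}^1\eta^0\bar D\,dx+2m_0d^2$; likewise $\int_{-1}^1\eta^0 D\,dx=\int_{-1}^1\eta^0\bar D\,dx+2m_0d$, while $\int_{-1}^1\delta^0 H\,dx=\int_{-1}^1\delta^0\bar H\,dx$ because $\int_{-1}^1\delta^0\,dx=0$. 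The key simplification is (\ref{id37}): since $\int_{-1}^1 D\,dx=2d$ it reads $\tilde\phi_x(-1,t)=-\dfrac{d}{(1+\gamma_\varepsilon)\varepsilon}$, so the boundary term of (\ref{id23}) turns into $-\tilde\phi_x(-1,t)\int_{-1}^1(\eta^0 D+\delta^0 H)\,dx=\dfrac{d}{(1+\gamma_\varepsilon)\varepsilon}\Big(\int_{-1}^1\eta^0\bar D\,dx+2m_0d+\int_{-1}^1\delta^0\bar H\,dx\Big)$.

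Then I would collect like terms. The $d^2$ contributions are $-\frac{2m_0d^2}{\varepsilon}$ (coming from $-\frac1\varepsilon\int_{-1}^1\eta^0 D^2\,dx$) and $+\frac{2m_0d^2}{(1+\gamma_\varepsilon)\varepsilon}$, summing to $-\dfrac{2m_0\gamma_\varepsilon d^2}{(1+\gamma_\varepsilon)\varepsilon}$ since $-1+\frac{1}{1+\gamma_\varepsilon}=-\frac{\gamma_\varepsilon}{1+\gamma_\varepsilon}$; the $\int_{-1}^1\eta^0\bar D\,dx$ contributions are $-\frac{2d}{\varepsilon}$ and $+\frac{d}{(1+\gamma_\varepsilon)\varepsilon}$, summing to $-\dfrac{(1+2\gamma_\varepsilon)d}{(1+\gamma_\varepsilon)\varepsilon}$ since $-2+\frac{1}{1+\gamma_\varepsilon}=-\frac{1+2\gamma_\varepsilon}{1+\gamma_\varepsilon}$; and the only $\int_{-1}^1\delta^0\bar H\,dx$ term is $+\dfrac{d}{(1+\gamma_\varepsilon)\varepsilon}$. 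Adding these to the untouched terms $-\int_{-1}^1(\bar D_x^2+\bar H_x^2+\frac1\varepsilon\eta^0\bar D^2)\,dx$ yields exactly the right-hand side of (\ref{id42}), which finishes the proof.

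I do not expect a genuine obstacle here, since the statement is an algebraic reorganization of Theorem~\ref{thm1}; the step that does the real work is the partial cancellation of the two $m_0 d^2/\varepsilon$ terms, which downgrades an $O(1/\varepsilon)$ quantity to $O(\gamma_\varepsilon/\varepsilon)=O(\gamma/\sqrt\varepsilon)$, together with the vanishing of $\int_{-1}^1\delta^0\,dx$, which keeps $h$ out of the boundary term. These are precisely the structural gains that will make (\ref{id42}), unlike (\ref{id23}), usable for the $\varepsilon$-uniform estimate of Theorem~\ref{prop1}. The only thing that needs care is the arithmetic with the coefficients above and the tracking of the factors of $2$ coming from $|(-1,1)|=2$.
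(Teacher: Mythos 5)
Your proposal is correct and follows essentially the same route as the paper: start from the energy identity (\ref{id23}), substitute $D=\bar D+d$, $H=\bar H+h$, replace the boundary factor by $\tilde\phi_x(-1,t)=-\frac{d}{(1+\gamma_\varepsilon)\varepsilon}$ via (\ref{id37}), and simplify with $\int_{-1}^1\eta^0\,dx=2m_0$ and $\int_{-1}^1\delta^0\,dx=0$. Your coefficient bookkeeping (the partial cancellation producing $-\frac{2m_0\gamma_\varepsilon d^2}{(1+\gamma_\varepsilon)\varepsilon}$ and the $-\frac{(1+2\gamma_\varepsilon)d}{(1+\gamma_\varepsilon)\varepsilon}$ prefactor) matches the paper's computation exactly.
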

\begin{proof}
Note that
\begin{align}
\int_{-1}^1 \bar{D}^2 dx &= \int_{-1}^1 (D-d)^2 dx = \int_{-1}^1 D^2 dx - 2d^2, \label{id43}\\
\int_{-1}^1 \bar{H}^2 dx &= \int_{-1}^1 (H-h)^2 dx = \int_{-1}^1 H^2 dx - 2h^2. \label{id44}
\end{align}
By (\ref{id43}) and (\ref{id44}), equation (\ref{id23}) becomes
\begin{align}
&\quad\ \frac{d}{dt} \bigg[ \frac{1}{2}\int_{-1}^1 (\bar{D}^2 + \bar{H}^2) dx + d^2 + h^2 \bigg] \notag\\
&= \frac{1}{2} \frac{d}{dt} \int_{-1}^1 (D^2 + H^2) dx \label{id45}\\
&= -\int_{-1}^1 \Big(D_x^2 + H_x^2 + \frac{1}{\varepsilon} \eta^0 D^2\Big) dx - \tilde{\phi}_x (-1,t) \int_{-1}^1 (\eta^0 D + \delta^0 H) dx. \notag
\end{align}
Then we put (\ref{id37}) into (\ref{id45}) and get
\begin{align}
&\quad\ \frac{d}{dt} \bigg[ \frac{1}{2}\int_{-1}^1 (\bar{D}^2 + \bar{H}^2) dx + d^2 + h^2 \bigg] \notag\\
&= -\int_{-1}^1 \Big(\bar{D}_x^2 + \bar{H}_x^2 + \frac{1}{\varepsilon} \eta^0 (\bar{D}+d)^2\Big) dx + \frac{d}{(1+\gamma_\varepsilon)\varepsilon} \int_{-1}^1 (\eta^0 (\bar{D}+d) + \delta^0 (\bar{H}+h)) dx \notag\\
&= -\int_{-1}^1 \Big(\bar{D}_x^2 + \bar{H}_x^2 + \frac{1}{\varepsilon} \eta^0 \bar{D}^2 \Big)dx - \frac{1}{\varepsilon} \int_{-1}^1 \eta^0 (2d\bar{D} + d^2) dx \notag \\ 
&\quad\quad+ \frac{d}{(1+\gamma_\varepsilon)\varepsilon} \int_{-1}^1 (\eta^0 \bar{D} + \delta^0 \bar{H})dx + \frac{d^2}{(1+\gamma_\varepsilon)\varepsilon} \int_{-1}^1 \eta^0 dx + \frac{dh}{(1+\gamma_\varepsilon)\varepsilon} \int_{-1}^1 \delta^0 dx \notag\\
&= -\int_{-1}^1 \Big(\bar{D}_x^2 + \bar{H}_x^2 + \frac{1}{\varepsilon} \eta^0 \bar{D}^2 \Big)dx - \frac{2m_0 \gamma_\varepsilon d^2}{(1+\gamma_\varepsilon)\varepsilon} \notag\\
&\quad\quad- \frac{(1+2\gamma_\varepsilon)d}{(1+\gamma_\varepsilon)\varepsilon} \int_{-1}^1 \eta^0 \bar{D} dx + \frac{d}{(1+\gamma_\varepsilon)\varepsilon} \int_{-1}^1 \delta^0 \bar{H} dx\,, \notag
\end{align}
which gives (\ref{id42}) and complete the proof of Theorem~\ref{thm2}. Here the last equality uses the fact that $d$ and $h$ are independent of $x$, and
\begin{align*}
\int_{-1}^1 \eta^0 dx = 2m_0,\quad \int_{-1}^1 \delta^0 dx = 0\,.
\end{align*}
\end{proof}

\subsection{Proof of Theorem~\ref{prop1}}
\ \ \ \
In order to use Theorem~\ref{thm2} for the proof of stability, we need to estimate the integral terms of (\ref{id42}) involving ${{\eta }^{0}}$ and ${{\delta }^{0}}$, where ${{\eta }^{0}}=\frac{{{m}_{0}}{{e}^{\psi }}}{\int_{-1}^{1}{{{e}^{\psi }}}}+\frac{{{m}_{0}}{{e}^{-\psi }}}{\int_{-1}^{1}{{{e}^{-\psi }}}}$, ${{\delta }^{0}}=\frac{{{m}_{0}}{{e}^{\psi }}}{\int_{-1}^{1}{{{e}^{\psi }}}}-\frac{{{m}_{0}}{{e}^{-\psi }}}{\int_{-1}^{1}{{{e}^{-\psi }}}}$ and $\psi$ is the solution of (\ref{id3-p}) satisfying
\BE\label{id3-p-1}
\varepsilon {{\psi }_{xx}}={{\delta }^{0}} \quad\hbox{ for }\quad x\in (-1,1)\,.
\EE
By Theorem A,
\begin{align}\label{id48}
|\psi(x)| \leq \phi_0(1) \left(e^{-\frac{M}{\sqrt{\varepsilon}}(1+x)} + e^{-\frac{M}{\sqrt{\varepsilon}}(1-x)} \right)
\end{align}
for all $x \in [-1,1]$ and $\varepsilon > 0$, where $M=\sqrt{\frac{{{m}_{0}}}{2{{e}^{{{\phi }_{0}}\left( 1 \right)}}}}$ independent of $\varepsilon$ and $\gamma$. Then we claim that
\begin{align}\label{id48-1}
\int_{-1}^1 e^{\pm \psi(x)} dx \leq 2 + K_0\phi_0(1)\frac{\sqrt{\varepsilon}}{M},
\end{align}
where $K_0$ is a positive constant defined as follows:
\[{{K}_{0}}=\underset{0<\left| y \right|\le {{\phi }_{0}}\left( 1 \right)}{\mathop{\sup }}\,\frac{\left| {{e}^{y}}-1 \right|}{\left| y \right|}>0\,. \]
Note that $K_0$ only depends on $\phi_0(1)$. By Theorem A, $\psi$ is odd, increasing, and $\left|\psi(x)\right|\leq\phi_0(1)$ for $x\in [-1,1]$, which implies
\begin{align}\label{int-exp-psi}
\int_{-1}^1 e^{\psi(x)} dx = \int_{-1}^1 e^{-\psi(x)} dx
\end{align}
and
\begin{align}\label{id48-2}
e^{\psi(x)} \leq \left\{
\begin{array}{ll}
1 + K_0 \psi(x), & x \geq 0, \\
1, & x < 0.
\end{array}
\right.
\end{align}
And then we may use (\ref{id48}) to get (\ref{id48-1}). Now we claim that
\begin{align}\label{et0-m0-1}
\left|\eta^0 - m_0 - |\eta^0 - m_0|\right| \leq m_0 K_0\phi_0(1)\frac{\sqrt{\varepsilon}}{M}.
\end{align}
We divide the domain interval $[-1,1]$ into two parts as follows:
\begin{align*}
A = \left\{x\in[-1,1]:\eta^0 \geq m_0 \right\}
\end{align*}
and
\begin{align*}
B = \left\{x\in[-1,1]:\eta^0 < m_0 \right\}.
\end{align*}
Then we get
\begin{align*}
\left|\eta^0 - m_0 - |\eta^0 - m_0|\right| = 0 \mbox{ on } A,
\end{align*}
and
\begin{align*}
\left|\eta^0 - m_0 - |\eta^0 - m_0|\right| = 2\left(m_0-\eta^0\right) \mbox{ on } B.
\end{align*}
On $B$, by the definition of $\eta^0$ and (\ref{int-exp-psi}), we get
\begin{align*}
m_0-\eta^0 &= m_0 \left[1 -\frac{1}{\int_{-1}^1 e^\psi}\left(e^{\psi} + e^{-\psi}\right) \right] \\
&\leq m_0 \left[1 -\frac{2}{\int_{-1}^1 e^\psi} \right] \\
&\leq m_0 \left[1 -\frac{1}{1 + K_0\phi_0(1)\frac{\sqrt{\varepsilon}}{2M}} \right] \\
&\leq m_0 K_0\phi_0(1)\frac{\sqrt{\varepsilon}}{2M}.
\end{align*}
Here we have used the fact that $e^\psi + e^{-\psi} \geq 2$ and (\ref{id48-1}). Therefore, we complete the proof of (\ref{et0-m0-1}). Moreover, by (\ref{et0-m0-1}) and the fact that $\int_{-1}^1 \eta^0 = 2m_0$, we have
\begin{align}\label{et0-m0-2}
\int_{-1}^1 \left|\eta^0 - m_0\right|dx \leq 2m_0 K_0\phi_0(1)\frac{\sqrt{\varepsilon}}{M}.
\end{align}

To get the gradient estimate of $\psi$, we multiply (\ref{id3-p-1}) by $\psi_x$ and integrate it over $(-1,x)$. Then
\begin{align}\label{id53}
\frac{\varepsilon}{2} \psi_x^2(x) = \frac{m_0}{\int_{-1}^1 e^\psi} \left(e^\psi + e^{-\psi}\right) + C_\varepsilon\,,\quad\hbox{ for }\quad x\in (-1,1)\,,
\end{align}
where $C_\varepsilon$ is a constant depending on $\varepsilon$. Taking the value at $x=0$ for both sides of (\ref{id53}), we have
\begin{align}
C_\varepsilon = \frac{\varepsilon}{2} \psi_x^2(0) - \frac{2m_0}{\int_{-1}^1 e^\psi}. \label{C-epsilon}
\end{align}
Integrate both sides of (\ref{id53}) and by (\ref{C-epsilon}), we get
\begin{align*}
\frac{\varepsilon}{2} \int_{-1}^1 \psi_x^2 = 2m_0 \left(1-\frac{2}{\int_{-1}^1 e^\psi}\right) + \varepsilon\psi_x^2(0).
\end{align*}
Then using (\ref{id48-1}), we have
\begin{align}
\frac{\varepsilon}{2} \int_{-1}^1 \psi_x^2 &\leq 2m_0 \left(1-\frac{1}{1 + K_0\phi_0(1)\frac{\sqrt{\varepsilon}}{2M}}\right) + \varepsilon\psi_x^2(0) \notag\\
&\leq m_0 K_0\phi_0(1)\frac{\sqrt{\varepsilon}}{M} + \varepsilon\psi_x^2(0). \label{est-grad-psi-0}
\end{align}
By Theorem A, $\psi$ is increasing on $[-1,1]$, convex in $(0,1)$, and concave in $(-1,0)$, which implies $\psi_x \geq \psi_x(0) \geq 0$. By the mean value theorem and (\ref{id48}),
\begin{align}
\psi_x(0) &\leq \frac{\psi\left(\frac{1}{2}\right) - \psi(0)}{\frac{1}{2}} \notag\\
&= 2\psi\left(\frac{1}{2}\right) \notag\\
&\leq 2\phi_0(1) \left(e^{-\frac{3M}{2\sqrt{\varepsilon}}} + e^{-\frac{M}{2\sqrt{\varepsilon}}} \right). \label{est-psi-x-0}
\end{align}
Therefore, (\ref{est-grad-psi-0}) implies
\begin{align}
\int_{-1}^1 \psi_x^2 \leq 2m_0 K_0\phi_0(1)\frac{1}{M\sqrt{\varepsilon}} + 4\phi_0^2(1) \left(e^{-\frac{3M}{2\sqrt{\varepsilon}}} + e^{-\frac{M}{2\sqrt{\varepsilon}}} \right)^2. \label{est-grad-psi-1}
\end{align}
And we have
\begin{align}
\int_{-1}^1 \psi_x^2 \leq 3m_0 K_0\phi_0(1)\frac{1}{M\sqrt{\varepsilon}} \label{est-grad-psi-2}
\end{align}
for $0 < \varepsilon < \varepsilon_0$, where $\varepsilon_0$ is a positive constant depending only on $m_0$ and $\phi_0(1)$.

Moreover, we use (\ref{id3-p-1}) and integration by parts to get
\begin{align*}
\int_{-1}^1 \delta^0 \bar{H} dx = \varepsilon \int_{-1}^1 \psi_{xx} \bar{H} dx = -\varepsilon \int_{-1}^1 \psi_x \bar{H}_x dx.
\end{align*}
The boundary integral of the last equality is zero because $\psi_x$ is even and $\bar{H}(-1) = \bar{H}(1) = -h$. Hence by (\ref{est-grad-psi-2}) and H\"{o}lder's inequality, we have
\begin{eqnarray}
\bigg|\frac{d}{(1+{{\gamma }_{\varepsilon }})\varepsilon }\int_{-1}^{1}{{{\delta }^{0}}}\bar{H}dx\bigg| &\le& \bigg|\frac{d}{1+{{\gamma }_{\varepsilon }}}\int_{-1}^{1}{{{\psi }_{x}}}{{{\bar{H}}}_{x}}dx\bigg| \notag \\
&\le& \frac{d}{1+{{\gamma }_{\varepsilon }}}{{\left(\int_{-1}^{1}{\psi _{x}^{2}}dx\right)}^{1/2}}{{\left(\int_{-1}^{1}{\bar{H}_{x}^{2}}dx\right)}^{1/2}} \notag \\
&\le& \frac{1}{2}\int_{-1}^{1}{\bar{H}_{x}^{2}}dx+\frac{{{d}^{2}}}{2(1+{{\gamma }_{\varepsilon }})}\int_{-1}^{1}{\psi _{x}^{2}}dx \notag \\
&\le& \frac{1}{2}\int_{-1}^{1}{\bar{H}_{x}^{2}}dx+\frac{3{{d}^{2}}}{2(1+{{\gamma }_{\varepsilon }})}{{m}_{0}}{{K}_{0}}{{\phi }_{0}}\left( 1 \right) \frac{1}{M\sqrt{\varepsilon}}\,. \label{id52}
\end{eqnarray}
Here we have used the fact that $\gamma_\varepsilon>0$.

Now we want to estimate the integral $\bigg|\frac{(1+2\gamma_\varepsilon)d}{(1+\gamma_\varepsilon)\varepsilon} \int_{-1}^1 \eta^0 \bar{D} dx\bigg|$ as follows:

\noindent Due to $\int_{-1}^1 \bar{D} dx = 0$,
\begin{align*}
\int_{-1}^1 \eta^0 \bar{D} dx = \int_{-1}^1 (\eta^0 - m_0) \bar{D} dx\,,
\end{align*}
so we may use the H\"{o}lder's inequality and $ab\le {{a}^{2}}+\frac{1}{4}{{b}^{2}}$, $\forall a,b\geq 0$, to get
\begin{align}
&\quad\ \bigg|\frac{(1+2\gamma_\varepsilon)d}{(1+\gamma_\varepsilon)\varepsilon} \int_{-1}^1 \eta^0 \bar{D} dx\bigg| \notag\\
&= \bigg|\frac{(1+2\gamma_\varepsilon)d}{(1+\gamma_\varepsilon)\varepsilon} \int_{-1}^1 (\eta^0 - m_0) \bar{D} dx\bigg| \notag \\
&\leq \bigg|\frac{(1+2\gamma_\varepsilon)d}{(1+\gamma_\varepsilon)\varepsilon}\bigg| \bigg(\int_{-1}^1 |\eta^0 - m_0|dx\bigg)^{1/2} \bigg(\int_{-1}^1 |\eta^0 - m_0| \bar{D}^2 dx\bigg)^{1/2} \notag\\
&\leq \frac{1}{\varepsilon} \int_{-1}^1 |\eta^0 - m_0| \bar{D}^2 dx + \frac{(1+2\gamma_\varepsilon)^2 d^2}{4(1+\gamma_\varepsilon)^2 \varepsilon} \int_{-1}^1 |\eta^0 - m_0|dx\,. \label{id49}
\end{align}
For the integral $\frac{{{(1+2{{\gamma }_{\varepsilon }})}^{2}}{{d}^{2}}}{4{{(1+{{\gamma }_{\varepsilon }})}^{2}}\varepsilon }\int_{-1}^{1}{|}{{\eta }^{0}}-{{m}_{0}}|dx$, we use (\ref{et0-m0-2})
\begin{align*}
\int_{-1}^1 \left|\eta^0 - m_0\right|dx \leq 2m_0 K_0\phi_0(1)\frac{\sqrt{\varepsilon}}{M},
\end{align*}
to get
\begin{align*}
\frac{(1+2\gamma_\varepsilon)^2 d^2}{4(1+\gamma_\varepsilon)^2\varepsilon} \int_{-1}^1 \left|\eta^0 - m_0\right|dx \leq \frac{(1+2\gamma_\varepsilon)^2 d^2}{2(1+\gamma_\varepsilon)^2 \sqrt{\varepsilon}}\frac{m_0 K_0\phi_0(1)}{M}.
\end{align*}
Hence (\ref{id49}) becomes
\BE\label{int-etm0}
\bigg|\frac{(1+2\gamma_\varepsilon)d}{(1+\gamma_\varepsilon)\varepsilon} \int_{-1}^1 \eta^0 \bar{D} dx\bigg|
\leq \frac{1}{\varepsilon} \int_{-1}^1 |\eta^0 - m_0| \bar{D}^2 dx +
\frac{(1+2\gamma_\varepsilon)^2 d^2}{2(1+\gamma_\varepsilon)^2 \sqrt{\varepsilon}}\frac{m_0 K_0\phi_0(1)}{M}.
\EE
Furthermore, we may use (\ref{et0-m0-1})
\begin{align*}
\left|\eta^0 - m_0 - |\eta^0 - m_0|\right| \leq m_0 K_0\phi_0(1)\frac{\sqrt{\varepsilon}}{M}
\end{align*}
and (\ref{int-etm0}) to get
\begin{eqnarray}
  & & -\frac{1}{\varepsilon }\int_{-1}^{1}{{{\eta }^{0}}{{{\bar{D}}}^{2}}}-\frac{(1+2{{\gamma }_{\varepsilon }})d}{(1+{{\gamma }_{\varepsilon }})\varepsilon }\int_{-1}^{1}{{{\eta }^{0}}}\bar{D}dx\le -\frac{1}{\varepsilon }\int_{-1}^{1}{{{\eta }^{0}}{{{\bar{D}}}^{2}}}+\left|\frac{(1+2{{\gamma }_{\varepsilon }})d}{(1+{{\gamma }_{\varepsilon }})\varepsilon }\int_{-1}^{1}{{{\eta }^{0}}}\bar{D}dx\right| \notag \\
 & &\le -\frac{1}{\varepsilon }\int_{-1}^{1}{{{\eta }^{0}}{{{\bar{D}}}^{2}}}+\frac{1}{\varepsilon }\int_{-1}^{1}{|}{{\eta }^{0}}-{{m}_{0}}|{{{\bar{D}}}^{2}}dx+\frac{(1+2\gamma_\varepsilon)^2 d^2}{2(1+\gamma_\varepsilon)^2 \sqrt{\varepsilon}}\frac{m_0 K_0\phi_0(1)}{M} \notag \\
 & & =-\frac{1}{\varepsilon }\int_{-1}^{1}{\left[ {{m}_{0}}+\left( {{\eta }^{0}}-{{m}_{0}}-\left| {{\eta }^{0}}-{{m}_{0}} \right| \right) \right]}{{{\bar{D}}}^{2}}+\frac{(1+2\gamma_\varepsilon)^2 d^2}{2(1+\gamma_\varepsilon)^2 \sqrt{\varepsilon}}\frac{m_0 K_0\phi_0(1)}{M} \notag \\
 & & \le -\frac{1}{\varepsilon }{{m}_{0}}\left[ 1- K_0\phi_0(1)\frac{\sqrt{\varepsilon}}{M} \right]\int_{-1}^{1}{{{{\bar{D}}}^{2}}}+\frac{(1+2\gamma_\varepsilon)^2 d^2}{2(1+\gamma_\varepsilon) \sqrt{\varepsilon}}\frac{m_0 K_0\phi_0(1)}{M}\,. \label{crustp1}
\end{eqnarray}
Again, here we have used $\gamma_\varepsilon > 0$. And we can choose $\varepsilon_1 > 0$ depending only on $m_0$ and $\phi_0(1)$ such that
\begin{align*}
1 - K_0\phi_0(1)\frac{\sqrt{\varepsilon}}{M} > \frac{1}{2}
\end{align*}
for $0 < \varepsilon < \varepsilon_1$. We substitute (\ref{id52}) and (\ref{crustp1}) into (\ref{id42}). Then
\begin{eqnarray}\label{rv-eng1}
\frac{d}{dt}\left[\frac{1}{2}\int_{-1}^{1}{({{{\bar{D}}}^{2}}+{{{\bar{H}}}^{2}})}dx+{{d}^{2}}+{{h}^{2}}\right] &\le& -\int_{-1}^{1}{\left( \bar{D}_{x}^{2}+\frac{1}{2}\bar{H}_{x}^{2} \right)} \\
& &~~-\frac{1}{\varepsilon }{{m}_{0}}\left[ 1- K_0\phi_0(1)\frac{\sqrt{\varepsilon}}{M} \right]\int_{-1}^{1}{{{{\bar{D}}}^{2}}}  \notag \\
& &~~+\frac{(1+2\gamma_\varepsilon)^2 d^2}{2(1+\gamma_\varepsilon) \sqrt{\varepsilon}}\frac{m_0 K_0\phi_0(1)}{M} \notag \\
& &~~-\frac{2{{m}_{0}}{{\gamma }_{\varepsilon }}{{d}^{2}}}{(1+{{\gamma }_{\varepsilon }})\varepsilon }+\frac{3{{d}^{2}}}{2(1+{{\gamma }_{\varepsilon }})}{{m}_{0}}{{K}_{0}}{{\phi }_{0}}\left( 1 \right) \frac{1}{M\sqrt{\varepsilon}}\,. \notag
\end{eqnarray}
Recall that
\begin{enumerate}
\item[(\ref{id52}):]
$$\bigg| \frac{d}{(1+\gamma_\varepsilon)\varepsilon} \int_{-1}^1 \delta^0 \bar{H} dx \bigg| \leq \frac{1}{2} \int_{-1}^1 \bar{H}_x^2 dx + \frac{3{{d}^{2}}}{2(1+{{\gamma }_{\varepsilon }})}{{m}_{0}}{{K}_{0}}{{\phi }_{0}}\left( 1 \right) \frac{1}{M\sqrt{\varepsilon}},$$
\item[(\ref{id42}):]
$$\begin{array}{rll}
\frac{d}{dt}\left[\frac{1}{2}\int_{-1}^{1}{({{{\bar{D}}}^{2}}+{{{\bar{H}}}^{2}})}dx+{{d}^{2}}+{{h}^{2}}\right] = &-\int_{-1}^{1}{(}\bar{D}_{x}^{2}+\bar{H}_{x}^{2}+\frac{1}{\varepsilon }{{\eta }^{0}}{{{\bar{D}}}^{2}})dx-\frac{2{{m}_{0}}{{\gamma }_{\varepsilon }}{{d}^{2}}}{(1+{{\gamma }_{\varepsilon }})\varepsilon } \\ & \\
 & -\frac{(1+2{{\gamma }_{\varepsilon }})d}{(1+{{\gamma }_{\varepsilon }})\varepsilon }\int_{-1}^{1}{{{\eta }^{0}}}\bar{D}dx+\frac{d}{(1+{{\gamma }_{\varepsilon }})\varepsilon }\int_{-1}^{1}{{{\delta }^{0}}}\bar{H}dx.
\end{array}$$
\end{enumerate}
In order to get (\ref{id47}), we need the nonpositiveness of the last three terms of (\ref{rv-eng1}) together, which may hold true by assuming $\frac{\gamma_\varepsilon}{\sqrt{\varepsilon}} > \frac{\left((1+2\gamma_{max})^2+3\right)K_0\phi_0(1)}{4M}$. Here we have used the fact that $\gamma_\varepsilon \leq \gamma_{max}$. Therefore, we set $\tilde{\varepsilon }=\min \left\{\varepsilon_0,\varepsilon_1\right\}$ ($\varepsilon_0,\varepsilon_1>0$ only depends on $m_0$ and $\phi_0(1)$) and complete the proof of Theorem~\ref{prop1}.

\subsection{Proof of Corollary~\ref{cor1} }
\medskip

Recall that since $D(\pm 1,t) = H(\pm 1,t) = 0$ and $\int_{-1}^1 \bar{D} dx = \int_{-1}^1 \bar{H} dx = 0$, we may use the Poincar\'{e}'s inequality to get
\begin{align*}
&\|D\|_{L_x^2} \le C \|D_x\|_{L_x^2},\quad \|\bar{D}\|_{L_x^2} \le C \|\bar{D}_x\|_{L_x^2},\\
&\|H\|_{L_x^2} \le C \|H_x\|_{L_x^2},\quad \|\bar{H}\|_{L_x^2} \le C \|\bar{H}_x\|_{L_x^2},
\end{align*}
where $C$ is a positive constant from the Poincar\'{e}'s inequality. Hence by the H\"{o}lder inequality, we have
$$
\left| d \right|=\left| \frac{1}{2}\int_{-1}^{1}{D} \right|\le \frac{1}{2}{{\left\| D \right\|}_{L_{x}^{2}}}{{\left( \int_{-1}^{1}{1} \right)}^{1/2}}=\frac{\sqrt{2}}{2}{{\left\| D \right\|}_{L_{x}^{2}}},
$$
which implies $|d|\leq \frac{\sqrt{2}C}{2}{{\| {{{\bar{D}}}_{x}} \|}_{L_{x}^{2}}}$ since $D_x = \bar{D}_x$. Similarly, $|h|\leq \frac{\sqrt{2}C}{2}{{\| {{{\bar{H}}}_{x}} \|}_{L_{x}^{2}}}$. Hence (\ref{id47}) implies
\begin{align}\notag
\frac{d}{dt} \bigg[ \frac{1}{2}\int_{-1}^1 (\bar{D}^2 + \bar{H}^2) dx + d^2 + h^2 \bigg] \leq -\alpha \bigg[ \frac{1}{2}\int_{-1}^1 (\bar{D}^2 + \bar{H}^2) dx + d^2 + h^2 \bigg]
\end{align}
for some $\alpha > 0$ depending only on the constants from the Poincar\'{e}'s inequality. Therefore, we obtain (\ref{id55}) and complete the proof.

\section{Proof of nonlinear stability}\label{nl-stab}
\ \ \ \ To get nonlinear stability, we generalize the idea of linear stability to study $(\tilde{\delta},\tilde{\eta},\tilde{\phi})$ the solution of nonlinear system (\ref{id12}) with boundary condition (\ref{id13}), which also has conservation laws as follows:
\begin{align}\label{id64}
\frac{d}{dt} \int_{-1}^{1} \tilde{\delta} dx = \frac{d}{dt} \int_{-1}^{1} \tilde{\eta} dx = 0 \quad \mbox{for} \quad t > 0\,.
\end{align}
As for linear stability, we assume the initial data of $(\tilde{\delta},\tilde{\eta})$ satisfying
\begin{align}
\int_{-1}^{1} \tilde{\delta} (x,0) dx = \int_{-1}^{1} \tilde{\eta} (x,0) dx = 0\,, \label{id65}
\end{align}
which is same as (\ref{id20}). Moreover, as for Theorem~\ref{prop1}, we set
\begin{align*}
D(x,t) = \int_{-1}^{x} \tilde{\delta}(s,t) ds,\quad H(x,t) = \int_{-1}^{x} \tilde{\eta}(s,t) ds \quad\hbox{ for }\quad x\in (-1,1)\,,\: t>0\,,
\end{align*}
$\bar{D}=D-d$ and $\bar{H} = H-h$, where $d = \frac{1}{2} \int_{-1}^1 D dx$ and $h = \frac{1}{2} \int_{-1}^1 H dx$.

To control the nonlinear terms $\tilde{\eta }{{\tilde{\phi }}_{x}}$ and $\tilde{\delta }{{\tilde{\phi }}_{x}}$ of system~(\ref{id12}), we assume that the initial data satisfies $n\left( x,0 \right)={{n}^{0}}\left( x \right)+\tilde{n}\left( x,0 \right)$, $p\left( x,0 \right)={{p}^{0}}\left( x \right)+\tilde{p}\left( x,0 \right)\ge 0$ for $x\in (-1,1)$ and (\ref{nonlinear-I0}), which implies that the right side of (\ref{nl-stability}) becomes negative (see Theorem~\ref{nl-stability-thm}). To prove Theorem~\ref{nl-stability-thm}, we first derive energy laws as for linear stability in Section~\ref{lzp}. Such energy laws are represented as follows:
\begin{thm}\label{thm5}
If $(\tilde{\delta},\tilde{\eta},\tilde{\phi})$ is a solution of (\ref{id12}) with boundary condition (\ref{id13}), and the initial data satisfy (\ref{id65}). Then we have
\begin{align}
&\quad\ \frac{1}{2} \frac{d}{dt} \int_{-1}^{1} (D^2 + H^2) dx \label{id73}\\
&= -\int_{-1}^1 \Big(D_x^2 + H_x^2 + \frac{1}{2\varepsilon} \eta^0 D^2 +  \frac{1}{2\varepsilon} \eta D^2 \Big) dx - \tilde{\phi}_x (-1,t) \int_{-1}^1 (\eta^0 D + \delta^0 H) dx, \notag
\end{align}
and
\begin{align}
&\quad\ \frac{d}{dt} \bigg[ \frac{1}{2}\int_{-1}^1 (\bar{D}^2 + \bar{H}^2) dx + d^2 + h^2 \bigg] \notag\\
&= -\int_{-1}^1 \Big(\bar{D}_x^2 + \bar{H}_x^2 + \frac{1}{2\varepsilon} \eta^0 \bar{D}^2 + \frac{1}{2\varepsilon} \eta \bar{D}^2 \Big)dx - \frac{2m_0 \gamma_\varepsilon d^2}{(1+\gamma_\varepsilon)\varepsilon} \label{id74}\\
&\quad\quad - \frac{(1+2\gamma_\varepsilon) d}{(1+\gamma_\varepsilon)\varepsilon} \int_{-1}^1 \eta^0 \bar{D} dx - \frac{d}{\varepsilon} \int_{-1}^1 \bar{H}_x \bar{D} dx+ \frac{d}{(1+\gamma_\varepsilon)\varepsilon} \int_{-1}^1 \delta^0 \bar{H} dx, \notag
\end{align}
where $\eta = n+p$, $(n,p,\phi)$ is the corresponding solution of the PNP
system (\ref{id1})-(\ref{id2}) with initial data satisfying $n\left( x,0 \right)={{n}^{0}}\left( x \right)+\tilde{n}\left( x,0 \right)$, $p\left( x,0 \right)={{p}^{0}}\left( x \right)+\tilde{p}\left( x,0 \right)\ge 0$ for $x\in (-1,1)$.
\end{thm}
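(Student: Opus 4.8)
The plan is to follow the scheme of Theorems~\ref{thm1} and~\ref{thm2}, isolating the two genuinely nonlinear terms $-(\tilde\eta\tilde\phi_x)_x$ and $-(\tilde\delta\tilde\phi_x)_x$ of (\ref{id12}) and showing that after integration by parts their contributions collapse into the single extra term $\frac{1}{2\varepsilon}\eta D^2$ in (\ref{id73}). First I would integrate the $\tilde\delta$- and $\tilde\eta$-equations of (\ref{id12}) over $(-1,x)$. Since the no-flux conditions in (\ref{id13}) assert exactly that $\tilde\delta_x-\eta^0\tilde\phi_x-\tilde\eta\psi_x-\tilde\eta\tilde\phi_x$ and $\tilde\eta_x-\delta^0\tilde\phi_x-\tilde\delta\psi_x-\tilde\delta\tilde\phi_x$ vanish at $x=\pm1$, the boundary term at $x=-1$ disappears, and using $\tilde\delta=D_x$, $\tilde\eta=H_x$ one obtains
\begin{align*}
D_t &= D_{xx}-\eta^0\tilde\phi_x-\psi_x H_x-\tilde\phi_x H_x,\\
H_t &= H_{xx}-\delta^0\tilde\phi_x-\psi_x D_x-\tilde\phi_x D_x.
\end{align*}
From the conservation laws (\ref{id64}) together with (\ref{id65}) we get $D(1,t)=\int_{-1}^1\tilde\delta\,dx=0$ and $H(1,t)=0$, while $D(-1,t)=H(-1,t)=0$ trivially; hence $D=H=0$ at $x=\pm1$ for all $t>0$. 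Moreover the derivations of (\ref{id27}) and (\ref{id37}) use only the Poisson equation, the Robin conditions and $\int_{-1}^1\tilde\delta\,dx=0$, so they remain valid: $\tilde\phi_x(x,t)=\frac1\varepsilon D(x,t)+\tilde\phi_x(-1,t)$ and $\tilde\phi_x(-1,t)=-\frac{d}{(1+\gamma_\varepsilon)\varepsilon}$.

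Next I would prove (\ref{id73}). Multiply the $D$-equation by $D$ and the $H$-equation by $H$, integrate over $(-1,1)$, and integrate by parts using $D=H=0$ at $x=\pm1$. The terms not involving the nonlinear factors $\tilde\phi_x H_x$, $\tilde\phi_x D_x$ are treated verbatim as in the proof of Theorem~\ref{thm1}: $\int\eta^0\tilde\phi_xD\,dx=\frac1\varepsilon\int\eta^0D^2\,dx+\tilde\phi_x(-1,t)\int\eta^0D\,dx$ as in (\ref{id28}), and identity (\ref{id30}) (which uses $\varepsilon\psi_{xx}=\delta^0$ from (\ref{pd1})) gives $\int(\delta^0\tilde\phi_xH+\psi_xD_xH)\,dx=-\int\psi_xDH_x\,dx+\tilde\phi_x(-1,t)\int\delta^0H\,dx$, so the $\psi_xDH_x$ contributions from the two equations cancel. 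For the new terms, substituting $\tilde\phi_x=\frac1\varepsilon D+\tilde\phi_x(-1,t)$ and integrating by parts,
\[
-\int\tilde\phi_xH_xD\,dx=-\frac1\varepsilon\int D^2H_x\,dx-\tilde\phi_x(-1,t)\int DH_x\,dx,
\]
\[
-\int\tilde\phi_xD_xH\,dx=\frac1{2\varepsilon}\int D^2H_x\,dx+\tilde\phi_x(-1,t)\int DH_x\,dx,
\]
the second line using $\int D_xDH\,dx=-\tfrac12\int D^2H_x\,dx$ and $\int D_xH\,dx=-\int DH_x\,dx$ (both boundary-term-free since $D(\pm1)=0$). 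Adding the two energy identities, the $\tilde\phi_x(-1,t)\int DH_x\,dx$ terms cancel and the $D^2H_x$ terms combine into $-\frac1{2\varepsilon}\int D^2H_x\,dx$; since $\eta=\eta^0+\tilde\eta=\eta^0+H_x$, we have $\frac1\varepsilon\eta^0D^2+\frac1{2\varepsilon}H_xD^2=\frac1{2\varepsilon}\eta^0D^2+\frac1{2\varepsilon}\eta D^2$, which yields (\ref{id73}).

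Then (\ref{id74}) follows by inserting $\tilde\phi_x(-1,t)=-\frac{d}{(1+\gamma_\varepsilon)\varepsilon}$ into (\ref{id73}) and substituting $D=\bar D+d$, $H=\bar H+h$ exactly as in (\ref{id43})--(\ref{id45}), using $\int_{-1}^1\bar D\,dx=\int_{-1}^1\bar H\,dx=0$, $\int_{-1}^1\eta^0\,dx=2m_0$, $\int_{-1}^1\delta^0\,dx=0$, and $\int_{-1}^1\eta\,dx=2m_0$ (the last from conservation of $n$ and $p$, or since $\eta=\eta^0+\bar H_x$ and $\bar H(\pm1)=-h$). Expanding $-\frac1{2\varepsilon}\int\eta^0D^2$, $-\frac1{2\varepsilon}\int\eta D^2$ and $-\tilde\phi_x(-1,t)\int(\eta^0D+\delta^0H)$, and writing $\eta=\eta^0+\bar H_x$ \emph{only} inside the linear-in-$\bar D$ piece $-\frac d\varepsilon\int\eta\bar D\,dx$, the coefficients of $\int\eta^0\bar D\,dx$ sum to $\frac d\varepsilon\big(-2+\frac1{1+\gamma_\varepsilon}\big)=-\frac{(1+2\gamma_\varepsilon)d}{(1+\gamma_\varepsilon)\varepsilon}$, the coefficients of $d^2$ sum to $\frac{m_0d^2}\varepsilon\big(-2+\frac2{1+\gamma_\varepsilon}\big)=-\frac{2m_0\gamma_\varepsilon d^2}{(1+\gamma_\varepsilon)\varepsilon}$, the $\bar H_x\bar D$-piece gives $-\frac d\varepsilon\int\bar H_x\bar D\,dx$, and the $\delta^0\bar H$-piece gives $\frac d{(1+\gamma_\varepsilon)\varepsilon}\int\delta^0\bar H\,dx$, which is precisely (\ref{id74}).

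The calculations themselves are elementary; the main difficulty is the bookkeeping in the second step. One must check that \emph{every} boundary term produced by the integrations by parts vanishes — which rests entirely on $D=H=0$ at $x=\pm1$, i.e.\ on the global electroneutrality assumption (\ref{id65}) — and track the signs so that the $\psi_xDH_x$ cross-terms and the $\tilde\phi_x(-1,t)\int DH_x\,dx$ terms cancel between the two equations while the $\frac1\varepsilon D^2H_x$ contributions assemble into exactly $\frac1{2\varepsilon}\eta D^2$. A second, subtler point is that in the third step one expands $\eta=\eta^0+\tilde\eta$ inside the first-order term $-\frac d\varepsilon\int\eta\bar D\,dx$ but keeps the quadratic term $\frac1{2\varepsilon}\int\eta\bar D^2\,dx$ unexpanded, since it is precisely this non-negative form (with $\eta=n+p\ge0$, cf.\ Proposition~\ref{rmk_eta}) that will be needed in the proof of Theorem~\ref{nl-stability-thm}.
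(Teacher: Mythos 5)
Your proof is correct and follows essentially the same strategy as the paper's: integrate (\ref{id12}) from $-1$ to $x$, test (\ref{id75})--(\ref{id76}) against $D$ and $H$, reuse the identities (\ref{id28}), (\ref{id30}), (\ref{id27}) and (\ref{id37})/(\ref{id82}) from the linear case, and collect the extra nonlinear contributions into $\frac{1}{2\varepsilon}\int\tilde\eta D^2\,dx$. The only (immaterial) difference is that the paper combines the two nonlinear integrals into $\int\tilde\phi_x(DH)_x\,dx$ and then applies the Poisson equation $\varepsilon\tilde\phi_{xx}=\tilde\delta=D_x$, whereas you substitute $\tilde\phi_x=\frac1\varepsilon D+\tilde\phi_x(-1,t)$ first and integrate by parts term by term, arriving at the same cancellations.
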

\begin{proof}
The proof of Theorem \ref{thm5} is similar to those of Theorem \ref{thm1} and Theorem \ref{thm2}, the difference is to deal with the nonlinear terms. By integrating the equations for $\tilde{\delta}$ and $\tilde{\eta}$
in (\ref{id12}) from $-1$ to $x$, we obtain
\begin{align}
D_t &= D_{xx} - \eta^0 \tilde{\phi}_x - \psi_x H_x - \tilde{\phi}_x H_x, \label{id75} \\
H_t &= H_{xx} - \delta^0 \tilde{\phi}_x - \psi_x D_x - \tilde{\phi}_x D_x. \label{id76}
\end{align}
Multiply (\ref{id75}) by $D$, integrate it from $-1$ to $1$, and do integration by parts, then
\begin{align}\label{id77}
\frac{1}{2} \frac{d}{dt} \int_{-1}^{1} D^2 dx = -\int_{-1}^1 \bigg(D_x^2 + \eta^0 \tilde{\phi}_x D + \psi_x H_x D + \tilde{\phi}_x H_x D\bigg) dx.
\end{align}
Here we have used the fact that $D(-1)=D(1)=0$. On the other hand, we multiply (\ref{id76}) by $H$, integrate it from $-1$ to $1$, and do integration by parts, then
\begin{align}\label{id78}
\frac{1}{2} \frac{d}{dt} \int_{-1}^{1} H^2 dx = -\int_{-1}^1 \bigg(H_x^2 + \delta^0 \tilde{\phi}_x H + \psi_x D_x H + \tilde{\phi}_x D_x H\bigg) dx.
\end{align}
Similarly, we have used the fact that $H(-1)=H(1)=0$. By the same argument in Theorem \ref{thm1}, we have
\begin{align}
&\quad\ \int_{-1}^1 \bigg(\eta^0 \tilde{\phi}_x D + \psi_x H_x D + \delta^0 \tilde{\phi}_x H + \psi_x D_x H\bigg) dx \label{id79}\\
&= \frac{1}{\varepsilon} \int_{-1}^1 \eta^0 D^2 dx + \tilde{\phi}_x (-1,t) \int_{-1}^1 (\eta^0 D + \delta^0 H) dx. \notag
\end{align}
For the nonlinear terms, we have
\begin{align}
\int_{-1}^1 \bigg(\tilde{\phi}_x H_x D + \tilde{\phi}_x D_x H\bigg) dx &= \int_{-1}^1 \tilde{\phi}_x (DH)_x dx \notag\\
&= -\int_{-1}^1 \tilde{\phi}_{xx} DH dx \notag\\
&= -\frac{1}{\varepsilon} \int_{-1}^1 \tilde{\delta} DH dx \label{id80}\\
&= -\frac{1}{\varepsilon} \int_{-1}^1 D_x DH dx \notag\\
&= \frac{1}{2\varepsilon} \int_{-1}^1 D^2 H_x dx \notag\\
&= \frac{1}{2\varepsilon} \int_{-1}^1 \tilde{\eta} D^2 dx. \notag
\end{align}
Notice that $\eta = \eta^0 + \tilde{\eta}$. Combining (\ref{id77})-(\ref{id80}), we obtain (\ref{id73}). Now we use the relations between $D$, $H$, $\bar{D}$, $\bar{H}$, $d$ and $h$, and (\ref{id73}). Then
\begin{align}
&\quad\ \frac{d}{dt} \bigg[ \frac{1}{2}\int_{-1}^1 (\bar{D}^2 + \bar{H}^2) dx + d^2 + h^2 \bigg] \notag\\
&= \frac{1}{2} \frac{d}{dt} \int_{-1}^1 (D^2 + H^2) dx \notag\\
&= -\int_{-1}^1 \Big(D_x^2 + H_x^2 + \frac{1}{2\varepsilon} \eta^0 D^2 +  \frac{1}{2\varepsilon} \eta D^2 \Big) dx - \tilde{\phi}_x (-1,t) \int_{-1}^1 (\eta^0 D + \delta^0 H) dx \label{id81}\\
&= -\int_{-1}^1 \Big(\bar{D}_x^2 + \bar{H}_x^2 + \frac{1}{2\varepsilon} \eta^0 (\bar{D}+d)^2 +  \frac{1}{2\varepsilon} \eta (\bar{D}+d)^2 \Big) dx \notag\\
&\quad\quad - \tilde{\phi}_x (-1,t) \int_{-1}^1 (\eta^0 (\bar{D}+d) + \delta^0 (\bar{H}+h)) dx. \notag
\end{align}
As for (\ref{id37}), we have
\begin{align}\label{id82}
\tilde{\phi}_x (-1,t) = -\frac{1}{2(1+\gamma_\varepsilon)\varepsilon} \int_{-1}^1 D dx = -\frac{d}{(1+\gamma_\varepsilon)\varepsilon}\,,
\end{align}
because the Poisson's equation of $\tilde{\phi}$ here is the same as that of the linearized problem. Then (\ref{id81}) becomes
\begin{align}
&\quad\ \frac{d}{dt} \bigg[ \frac{1}{2}\int_{-1}^1 (\bar{D}^2 + \bar{H}^2) dx + d^2 + h^2 \bigg] \notag\\
&= -\int_{-1}^1 \Big(\bar{D}_x^2 + \bar{H}_x^2 + \frac{1}{2\varepsilon} \eta^0 (\bar{D}+d)^2 +  \frac{1}{2\varepsilon} \eta (\bar{D}+d)^2 \Big) dx \notag\\
&\quad\quad + \frac{d}{(1+\gamma_\varepsilon)\varepsilon} \int_{-1}^1 (\eta^0 (\bar{D}+d) + \delta^0 (\bar{H}+h)) dx \label{id83}\\
&= -\int_{-1}^1 \Big(\bar{D}_x^2 + \bar{H}_x^2 + \frac{1}{2\varepsilon} \eta^0 \bar{D}^2 + \frac{1}{2\varepsilon} \eta \bar{D}^2 \Big)dx - \frac{2m_0 \gamma_\varepsilon d^2}{(1+\gamma_\varepsilon)\varepsilon} \notag\\
&\quad\quad - \frac{(1+2\gamma_\varepsilon) d}{(1+\gamma_\varepsilon)\varepsilon} \int_{-1}^1 \eta^0 \bar{D} dx - \frac{d}{\varepsilon} \int_{-1}^1 \bar{H}_x \bar{D} dx+ \frac{d}{(1+\gamma_\varepsilon)\varepsilon} \int_{-1}^1 \delta^0 \bar{H} dx. \notag
\end{align}
Here we have used $\tilde{\eta} = \bar{H}_x$. Then we complete the proof of Theorem~\ref{thm5}.
\end{proof}

\noindent {\bf Proof of Theorem~\ref{nl-stability-thm}}

In order to use (\ref{id74}) for the proof of Theorem~\ref{nl-stability-thm}, we need to estimate terms in the right-hand side of (\ref{id74}). For the term $\int_{-1}^1 \eta \bar{D}^2 dx \geq 0$, we need the nonnegative sign of $\eta=n+p$ which may come from the following result:
\begin{prop}\label{rmk_eta}
Let $(n,p,\phi)$ be the solution of (\ref{id1})-(\ref{id2}) with initial data ${{n}_{0}},{{p}_{0}}\in {{L}^{2}}\left( -1,1 \right)$ and $n_0, p_0\geq 0$ for $x\in (-1,1)$. Then $n,p\ge 0$ for $x\in \left( -1,1 \right),t>0$.
\end{prop}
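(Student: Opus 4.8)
The plan is to prove the non-negativity by a standard parabolic energy argument applied to the negative part of each concentration, together with Gr\"onwall's inequality. Write $n^{-}:=\min\{n,0\}\le 0$, so that $n=n^{+}+n^{-}$ with $n^{+}:=\max\{n,0\}\ge 0$ and (by the Stampacchia chain rule) $n^{-}_x=n_x\,\mathbf{1}_{\{n<0\}}$ almost everywhere; similarly set $p^{-}:=\min\{p,0\}$. Since $n_0\ge 0$ and $p_0\ge 0$, we have $n^{-}(\cdot,0)\equiv 0$ and $p^{-}(\cdot,0)\equiv 0$, so it suffices to show $\int_{-1}^{1}(n^{-})^{2}\,dx\equiv 0$ and $\int_{-1}^{1}(p^{-})^{2}\,dx\equiv 0$ for $t>0$.

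First I would multiply the equation $n_t=(n_x-n\phi_x)_x$ by $n^{-}$, integrate over $(-1,1)$ and integrate by parts. The boundary contribution $\big[\,n^{-}(n_x-n\phi_x)\,\big]_{x=-1}^{x=1}$ vanishes because of the no-flux condition $n_x-n\phi_x=0$ at $x=\pm1$ in (\ref{id2}). Using $n^{-}_x\,n_x=(n^{-}_x)^{2}$ and $n^{-}_x\,n=n^{-}_x\,n^{-}$ almost everywhere, this yields
\[
\frac{1}{2}\frac{d}{dt}\int_{-1}^{1}(n^{-})^{2}\,dx
=-\int_{-1}^{1}(n^{-}_x)^{2}\,dx+\int_{-1}^{1}n^{-}\,n^{-}_x\,\phi_x\,dx .
\]
Bounding the last term, via Cauchy--Schwarz and Young's inequality, by $\tfrac12\int_{-1}^{1}(n^{-}_x)^{2}\,dx+\tfrac12\|\phi_x(\cdot,t)\|_{L^{\infty}_x}^{2}\int_{-1}^{1}(n^{-})^{2}\,dx$, and absorbing the gradient term, gives
\[
\frac{d}{dt}\int_{-1}^{1}(n^{-})^{2}\,dx\le \|\phi_x(\cdot,t)\|_{L^{\infty}_x}^{2}\int_{-1}^{1}(n^{-})^{2}\,dx .
\]
The identical computation for $p_t=(p_x+p\phi_x)_x$, using the no-flux condition for $p$, produces the same differential inequality for $\int_{-1}^{1}(p^{-})^{2}\,dx$ (the sign of $\phi_x$ plays no role).

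Next I would control $\|\phi_x(\cdot,t)\|_{L^{\infty}_x}$ on each finite interval $[0,T]$. From the Poisson equation $\varepsilon\phi_{xx}=n-p$ together with the Robin conditions of (\ref{id2}), one-dimensional elliptic regularity gives $\phi(\cdot,t)\in H^{2}(-1,1)$ with a bound in terms of $\|n(\cdot,t)-p(\cdot,t)\|_{L^{2}_x}$ and $\phi_0(\pm1)$; since $H^{2}(-1,1)\hookrightarrow C^{1}([-1,1])$ and $n,p\in C([0,T];L^{2}_x)$ for the PNP solution, one obtains $K_T:=\sup_{0\le t\le T}\|\phi_x(\cdot,t)\|_{L^{\infty}_x}<\infty$. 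Gr\"onwall's inequality then forces $\int_{-1}^{1}(n^{-})^{2}\,dx\le\big(\int_{-1}^{1}(n^{-})^{2}\,dx\big)\big|_{t=0}\,e^{K_T^{2}t}=0$ on $[0,T]$, hence $n\ge 0$; the same argument gives $p\ge 0$. Since $T$ is arbitrary, the proof is complete.

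The main obstacle is the rigorous (as opposed to purely formal) justification: multiplying the equation by $n^{-}$ and integrating by parts requires enough regularity of $n$, so in practice one would run the estimate on a Galerkin or parabolically regularized approximation and pass to the limit, or invoke the known parabolic smoothing ($n,p\in L^{2}(0,T;H^{1}_x)\cap C([0,T];L^{2}_x)$) of the PNP solution; and the bound $K_T<\infty$ rests on the a priori $L^{\infty}_tL^{2}_x$ estimate for $(n,p)$, which itself follows from conservation of mass $\int_{-1}^{1}n\,dx=\int_{-1}^{1}p\,dx=m_0$ together with the basic energy dissipation of (\ref{id1})--(\ref{id2}). An alternative that avoids the $\phi_{xx}$ term altogether is the substitution $n=e^{\phi}u$, which converts the $n$-equation into the linear parabolic equation $u_t=u_{xx}+\phi_x u_x-\phi_t u$ with homogeneous Neumann data $u_x(\pm1,t)=0$ and $u(\cdot,0)=e^{-\phi(\cdot,0)}n_0\ge 0$; the weak maximum principle (or the same energy estimate on $u^{-}$) then gives $u\ge 0$ and hence $n\ge 0$, at the cost of needing a time-integrable bound on $\phi_t$.
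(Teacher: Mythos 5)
Your proof follows the paper's argument in Appendix~II in all essentials: test the $n$-- and $p$--equations against the negative parts $n_-=\min\{n,0\}$ and $p_-=\min\{p,0\}$, kill the boundary term with the no-flux condition, use the Stampacchia identities, and close via Gr\"onwall using $n_-(\cdot,0)=p_-(\cdot,0)\equiv0$. The only difference is cosmetic, in how the cross term $\int n_- (n_-)_x\phi_x$ is absorbed: you invoke Cauchy--Schwarz together with $\|\phi_x(\cdot,t)\|_{L^\infty_x}$, bounded on $[0,T]$ via $H^2(-1,1)\hookrightarrow C^1([-1,1])$ and the Poisson equation, so your Gr\"onwall rate is $\|\phi_x\|_{L^\infty_x}^2$; the paper instead splits as $L^2\times L^3\times L^6$, applies the interpolation $\|u\|_{L^3}\le C\|u\|_{L^2}^{1/2}\|u\|_{H^1}^{1/2}$, and controls $\|\phi_x\|_{L^6_x}\lesssim 1+\|n-p\|_{L^2_x}$, obtaining the rate $f(t)=C(1+\|n-p\|_{L^2_x})^4$. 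Both closures rest on the same a priori hypothesis $n,p\in L^\infty(0,T;L^2)$, so the two versions are interchangeable in one dimension; the paper's $L^3$--$L^6$ split is merely the variant that would still function in higher dimensions where $\phi_x\notin L^\infty$. Your concluding remarks on the need for a rigorous (Galerkin or parabolic-smoothing) justification of the formal testing, and the alternative $n=e^\phi u$ substitution reducing to a maximum principle, go beyond what the paper records but are sound observations.
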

\noindent The proof of Proposition~\ref{rmk_eta} is standard and is given in Appendix~II. A similar proof can be found in~\cite{bhn-na94}. Proposition~\ref{rmk_eta} implies $\eta \left( x,t \right)=n(x,t)+p(x,t)\ge 0$ for $x\in \left( -1,1 \right),t>0$. Here we assume initial data $n_0(x)=n\left( x,0 \right)={{n}^{0}}\left( x \right)+\tilde{n}\left( x,0 \right)\ge 0$ and $p_0(x)=p\left( x,0 \right)={{p}^{0}}\left( x \right)+\tilde{p}\left( x,0 \right)\ge 0$ for $x\in (-1,1)$.

Now we need to deal with the last three integrals in the right-hand side of (\ref{id74}). For the integral $\frac{(1+2\gamma_\varepsilon) d}{(1+\gamma_\varepsilon)\varepsilon} \int_{-1}^1 \eta^0 \bar{D} dx$, similar to (\ref{id49}), we have
\begin{align}\label{est-nl-1}
\left|\frac{(1+2\gamma_\varepsilon) d}{(1+\gamma_\varepsilon)\varepsilon} \int_{-1}^1 \eta^0 \bar{D} dx\right| \leq \frac{1}{2\varepsilon}\int_{-1}^1\left|\eta^0-m_0\right|\bar{D}^2dx + \frac{(1+2\gamma_\varepsilon)^2 d^2}{2(1+\gamma_\varepsilon)^2\varepsilon}\int_{-1}^1\left|\eta^0-m_0\right|dx.
\end{align}
As for (\ref{id49}), we use $ab \leq \frac{1}{2}a^2 + \frac{1}{2}b^2$, $\forall a,b\geq 0$, instead of $ab \leq a^2 + \frac{1}{4}b^2$, $\forall a,b\geq 0$. We also use (\ref{et0-m0-2}) to estimate $\int_{-1}^1\left|\eta^0-m_0\right|dx$ and the fact $\gamma_\varepsilon > 0$ to get
\begin{align}
\left|\frac{(1+2\gamma_\varepsilon) d}{(1+\gamma_\varepsilon)\varepsilon} \int_{-1}^1 \eta^0 \bar{D} dx\right| &\leq \frac{1}{2\varepsilon}\int_{-1}^1\left|\eta^0-m_0\right|\bar{D}^2dx + \frac{m_0 K_0\phi_0(1)(1+2\gamma_\varepsilon)^2 d^2}{(1+\gamma_\varepsilon)^2 M\sqrt{\varepsilon}} \notag\\
&\leq \frac{1}{2\varepsilon}\int_{-1}^1\left|\eta^0-m_0\right|\bar{D}^2dx + \frac{m_0 K_0\phi_0(1)(1+2\gamma_\varepsilon)^2 d^2}{(1+\gamma_\varepsilon) M\sqrt{\varepsilon}}. \label{est-nl-2}
\end{align}
Furthermore, we use (\ref{et0-m0-1}) to get
\begin{align}
&\quad\ -\frac{1}{2\varepsilon}\int_{-1}^1 \eta^0 \bar{D}^2 dx - \frac{(1+2\gamma_\varepsilon) d}{(1+\gamma_\varepsilon)\varepsilon} \int_{-1}^1 \eta^0 \bar{D} dx \notag \\
&\leq -\frac{1}{2\varepsilon}\int_{-1}^1 \eta^0 \bar{D}^2 dx + \left|\frac{(1+2\gamma_\varepsilon) d}{(1+\gamma_\varepsilon)\varepsilon} \int_{-1}^1 \eta^0 \bar{D} dx\right| \notag \\
&\leq -\frac{1}{2\varepsilon}\int_{-1}^1 \left[m_0 +\left(\eta^0 - m_0 - \left|\eta^0-m_0\right|\right)\right]\bar{D}^2dx + \frac{m_0 K_0\phi_0(1)(1+2\gamma_\varepsilon)^2 d^2}{(1+\gamma_\varepsilon) M\sqrt{\varepsilon}} \notag \\
&\leq -\frac{m_0}{2\varepsilon}\left[1 - K_0\phi_0(1)\frac{\sqrt{\varepsilon}}{M}\right]\int_{-1}^1 \bar{D}^2dx + \frac{m_0 K_0\phi_0(1)(1+2\gamma_\varepsilon)^2 d^2}{(1+\gamma_\varepsilon) M\sqrt{\varepsilon}}\,. \label{est-nl-3}
\end{align}
By (\ref{id52}), we have
\begin{align}
\bigg| \frac{d}{(1+\gamma_\varepsilon)\varepsilon} \int_{-1}^1 \delta^0 \bar{H} dx \bigg| \leq \frac{1}{2} \int_{-1}^1 \bar{H}_x^2 dx + \frac{3{{d}^{2}}}{2(1+{{\gamma }_{\varepsilon }})}{{m}_{0}}{{K}_{0}}{{\phi }_{0}}\left( 1 \right) \frac{1}{M\sqrt{\varepsilon}}\,, \label{est-nl-4}
\end{align}
for $0 < \varepsilon < \varepsilon_0$, where $\varepsilon_0$ comes from (\ref{est-grad-psi-2}) and depends only on $m_0$ and $\phi_0(1)$. H\"{o}lder's and Young's inequalities give
\begin{align}
\left|\frac{d}{\varepsilon} \int_{-1}^1 \bar{H}_x \bar{D} dx\right| &\leq \frac{d}{\varepsilon} \left(\int_{-1}^1 \bar{H}_x^2 dx\right)^{\frac{1}{2}} \left(\int_{-1}^1 \bar{D}^2 dx\right)^{\frac{1}{2}} \notag\\
&\leq \frac{m_0}{4\varepsilon}\int_{-1}^1 \bar{D}^2 dx + \frac{d^2}{m_0 \varepsilon}\int_{-1}^1 \bar{H}_x^2 dx. \label{est-nl-5}
\end{align}
Recall (\ref{id74}):
\begin{align*}
&\quad\ \frac{d}{dt} \bigg[ \frac{1}{2}\int_{-1}^1 (\bar{D}^2 + \bar{H}^2) dx + d^2 + h^2 \bigg] \\
&= -\int_{-1}^1 \Big(\bar{D}_x^2 + \bar{H}_x^2 + \frac{1}{2\varepsilon} \eta^0 \bar{D}^2 + \frac{1}{2\varepsilon} \eta \bar{D}^2 \Big)dx - \frac{2m_0 \gamma_\varepsilon d^2}{(1+\gamma_\varepsilon)\varepsilon} \\
&\quad\quad - \frac{(1+2\gamma_\varepsilon) d}{(1+\gamma_\varepsilon)\varepsilon} \int_{-1}^1 \eta^0 \bar{D} dx - \frac{d}{\varepsilon} \int_{-1}^1 \bar{H}_x \bar{D} dx+ \frac{d}{(1+\gamma_\varepsilon)\varepsilon} \int_{-1}^1 \delta^0 \bar{H} dx.
\end{align*}
Substituting (\ref{est-nl-3})--(\ref{est-nl-5}) into (\ref{id74}), we get
\begin{align}
&\quad\ \frac{d}{dt} \bigg[ \frac{1}{2}\int_{-1}^1 (\bar{D}^2 + \bar{H}^2) dx + d^2 + h^2 \bigg] \notag\\
&\leq -\int_{-1}^1 \bigg(\bar{D}_x^2 + \left(\frac{1}{2}-\frac{d^2}{m_0 \varepsilon}\right)\bar{H}_x^2\bigg)dx - \frac{m_0}{4\varepsilon}\left[1 - 2K_0\phi_0(1)\frac{\sqrt{\varepsilon}}{M}\right]\int_{-1}^1 \bar{D}^2dx \label{est-nl-6}\\
&\quad\ -\frac{2m_0 \gamma_\varepsilon d^2}{(1+\gamma_\varepsilon)\varepsilon} + \frac{m_0 K_0\phi_0(1)(1+2\gamma_\varepsilon)^2 d^2}{(1+\gamma_\varepsilon) M\sqrt{\varepsilon}} + \frac{3{{d}^{2}}}{2(1+{{\gamma }_{\varepsilon }})}{{m}_{0}}{{K}_{0}}{{\phi }_{0}}\left( 1 \right) \frac{1}{M\sqrt{\varepsilon}}. \notag
\end{align}
We set $1 - 2K_0\phi_0(1)\frac{\sqrt{\varepsilon}}{M} > \frac{1}{2}$ for $0 < \varepsilon < \varepsilon_2$, where $\varepsilon_2 > 0$ depending only on $m_0$ and $\phi_0(1)$. As for Theorem~\ref{prop1}, the last three terms of (\ref{est-nl-6}) become nonpositive if $\frac{\gamma_\varepsilon}{\sqrt{\varepsilon}} > \frac{\left((1+2\gamma_{max})^2+3\right)K_0\phi_0(1)}{4M}$ holds true. Hence we obtain (\ref{nl-stability}) by letting $\tilde{\varepsilon}' := \min\{\varepsilon_0,\varepsilon_2\}$.

Now we claim that if $I_0 < \frac{\theta m_0\varepsilon}{2}$ for some $0<\theta<1$, then $d^2(t) \leq I(t) \leq I(0) = I_0$ for all $t>0$, where $I(t) := \frac{1}{2}\int_{-1}^1 (\bar{D}^2 + \bar{H}^2) dx + d^2 + h^2$. Notice that by (\ref{nl-stability}),
\begin{align}\label{I-0-decrease}
\frac{d}{dt}\left[\frac{1}{2}\int_{-1}^1 (\bar{D}^2 + \bar{H}^2) dx + d^2 + h^2\right] \leq 0\,,
\end{align}
provided that $d^2 \leq I_0 \leq \frac{m_0\varepsilon}{2}$. Assume $I_0 < \frac{\theta m_0\varepsilon}{2}$ for some $0<\theta<1$. Then (\ref{I-0-decrease}) implies that $d^2(t) \leq I(t) \leq I(0)=I_0 < \frac{\theta m_0\varepsilon}{2}$ for all $t>0$ (see Appendix~III for the detail). Therefore, we conclude that $\frac{1}{2}-\frac{d^2}{m_0 \varepsilon} > \frac{1}{2}(1-\theta) > 0$ for all $t \geq 0$, and complete the proof of Theorem~\ref{nl-stability-thm}.

\noindent {\bf Proof of Corollary~\ref{cor4-1} }

\noindent By Theorem \ref{nl-stability-thm}, we have
\begin{align*}
\frac{d}{dt}\left[\frac{1}{2}\int_{-1}^{1}{({{{\bar{D}}}^{2}}+{{{\bar{H}}}^{2}})}dx+{{d}^{2}}+{{h}^{2}}\right] \leq -\int_{-1}^1 \bigg(\bar{D}_x^2 + \frac{1}{2}(1-\theta)\bar{H}_x^2\bigg)dx - \frac{m_0}{8\varepsilon} \int_{-1}^1 \bar{D}^2 dx
\end{align*}
for $t>0$ and $0<\varepsilon<\tilde{\varepsilon}'$. As for the proof of Corollary \ref{cor1}, we have
\begin{align*}
\frac{d}{dt} \bigg[ \frac{1}{2}\int_{-1}^1 (\bar{D}^2 + \bar{H}^2) dx + d^2 + h^2 \bigg] \leq -\alpha' \bigg[ \frac{1}{2}\int_{-1}^1 (\bar{D}^2 + \bar{H}^2) dx + d^2 + h^2 \bigg]
\end{align*}
for some $\alpha' > 0$ depending only on $\theta$ and the constants from the Poincar\'{e}'s inequality. Therefore, we complete the proof of Corollary~\ref{cor4-1}.

\section*{Appendix~I}
\ \ \ \
Here we want to prove that ${{\left\| D \right\|}_{L_{x}^{2}}}+{{\left\| H \right\|}_{L_{x}^{2}}}$ is equivalent to ${{\left\| {\tilde{n}} \right\|}_{H_{x}^{-1}}}+{{\left\| {\tilde{p}} \right\|}_{H_{x}^{-1}}}$, which means that
$$
{{C}_{1}}\left( {{\left\| D \right\|}_{L_{x}^{2}}}+{{\left\| H \right\|}_{L_{x}^{2}}} \right)\le {{\left\| {\tilde{n}} \right\|}_{H_{x}^{-1}}}+{{\left\| {\tilde{p}} \right\|}_{H_{x}^{-1}}}\le {{C}_{2}}\left( {{\left\| D \right\|}_{L_{x}^{2}}}+{{\left\| H \right\|}_{L_{x}^{2}}} \right)
$$
for some constants $C_j>0$, $j=1,2$ independent of $D, H, \tilde{n}$ and $\tilde{p}$. Because $\tilde{n}=\frac{1}{2}(\tilde{\delta}+\tilde{\eta})$ and $\tilde{p}=\frac{1}{2}(\tilde{\delta}-\tilde{\eta})$, it is sufficient to show that
\BE\label{equi-nm}
{{C}_{1}}{{\| D \|}_{L_{x}^{2}}}\le {{\| {\tilde{\delta }} \|}_{H_{x}^{-1}}}\le {{C}_{2}}{{\| D \|}_{L_{x}^{2}}}\quad\hbox{and}\quad {{C}_{1}}{{\| H \|}_{L_{x}^{2}}}\le {{\| {\tilde{\eta }} \|}_{H_{x}^{-1}}}\le {{C}_{2}}{{\| H \|}_{L_{x}^{2}}}\,,
\EE
for some constants $C_j>0$, $j=1,2$ independent of $D, H, \tilde{\delta}$ and $\tilde{\eta}$. For any fixed $t>0$, we set $\tilde{\delta}\in H^{-1}_x\left((-1,1)\right)$ and denote $\langle \tilde{\delta},v \rangle$ and $\|\tilde{\delta}\|_{H^{-1}_x}$ as follows:
\begin{align*}
\langle \tilde{\delta},v \rangle = \int_{-1}^1\, \tilde{\delta}v dx \quad\mbox{ for all }\: v\in H^1\left((-1,1)\right)\,,
\end{align*}
and
\begin{align*}
\|\tilde{\delta}\|_{H^{-1}_x} = \sup_{\|v\|_{H^1_x}=1} \int_{-1}^1 \tilde{\delta}\, v\, dx,
\end{align*}
where $\langle\ ,\ \rangle$ denotes the pairing between $H^{-1}$ and $H^1$. By the definition of $D$ (see (\ref{id21})), we have $D_x = \tilde{\delta}$, $D(\pm 1,t) = 0$ and hence $\langle \tilde{\delta},v \rangle=-\int_{-1}^1\,D\,v_x\,dx$ for $v\in H^1((-1,1))$ using integration by part.

Now we claim that $\|\tilde{\delta}\|_{H^{-1}_x}$ is equivalent to $\|D\|_{L^2_x}$, which means that ${{C}_{1}}{{\| D \|}_{L_{x}^{2}}}\le {{\| {\tilde{\delta }} \|}_{H_{x}^{-1}}}\le {{C}_{2}}{{\| D \|}_{L_{x}^{2}}}$ for some constants $C_j>0, j=1,2$ independent of $D$ and $\tilde{\delta }$. For any $v\in H^1((-1,1))$ with $\|v\|_{H^1_x}=1$,
\begin{align*}
\left|\langle \tilde{\delta},v \rangle\right| = \left|\int_{-1}^1 Dv_x dx \right| \leq \|D\|_{L^2_x}\|v_x\|_{L^2_x} \leq \|D\|_{L^2_x}\,.
\end{align*}
Here we have used Holder's inequality and the fact that $1=\left\| v \right\|_{{{H}^{1}}}^{2}=\left\| v \right\|_{{{L}^{2}}}^{2}+\left\| {{v}_{x}} \right\|_{{{L}^{2}}}^{2}$. Consequently, $\|\tilde{\delta}\|_{H^{-1}} \leq \|D\|_{L^2}$. On the other hand, let
\begin{align*}
\mathcal{D}(x,t) := \int_{-1}^x D(s,t)ds - \frac{1}{2}\int_{-1}^1\int_{-1}^x D(s,t)dsdx\,,\quad\hbox{for}\: x\in (-1,1)\,, \: t>0\,.
\end{align*}
Then $\mathcal{D}_x = D$, $\int_{-1}^1 \mathcal{D}dx = 0$, and hence we have
\begin{align*}
\int_{-1}^1 D^2 dx &= -\int_{-1}^1 \,\mathcal{D}\,\tilde{\delta}\, dx \\
&\leq \|\tilde{\delta}\|_{H^{-1}_x}\|\mathcal{D}\|_{H^1_x} \\
&\leq C\|\tilde{\delta}\|_{H^{-1}_x}\|D\|_{L^2_x} \mbox{ by Holder's and Poincar\'{e}'s inequalities}\,,
\end{align*}
which implies $\|D\|_{L^2_x} \leq C \|\tilde{\delta}\|_{H^{-1}_x}$. Note that $D^2=\mathcal{D}_x\,D$, $D_x = \tilde{\delta}$, $D(\pm 1,t) = 0$, and here we have used integration by parts for the first equality. Therefore, $\|\tilde{\delta}\|_{H^{-1}_x}$ is equivalent to $\|D\|_{L^2_x}$. Similarly, we may get the equivalence between $\|\tilde{\eta}\|_{H^{-1}_x}$ and $\|H\|_{L^2_x}$ and complete the proof of (\ref{equi-nm}).

\section*{Appendix~II}
\ \ \ \
Here we state the proof of Proposition~\ref{rmk_eta}. Multiply equations of $n$ and $p$ of (\ref{id1}) by $n_- := \min\{n,0\}$
and $p_- := \min\{p,0\}$, respectively, and integrate them over $(-1,1)$. Then
\begin{align*}
\frac{1}{2}\frac{d}{dt}\int_{-1}^1 n_-^2 dx &= -\int_{-1}^1 \left((n_-)_x^2 - n_- (n_-)_x\phi_x\right) dx, \\
\frac{1}{2}\frac{d}{dt}\int_{-1}^1 p_-^2 dx &= -\int_{-1}^1 \left((p_-)_x^2 + p_-(p_-)_x\phi_x\right) dx.
\end{align*}
We use the interpolation inequality $\|u\|_{L^3}\leq
C\|u\|_{L^2}^{1/2}\|u\|_{H^1}^{1/2}$ for $u\in H^1(-1,1)$ to get
\begin{align*}
\bigg|\int_{-1}^1 n_- (n_-)_x\,\phi_x\, dx \bigg| &\leq \int_{-1}^1 |n_- (n_-)_x\,\phi_x| dx \\
&\leq \|(n_-)_x\|_{L^2_x((-1,1))} \|n_-\|_{L^3_x((-1,1))} \|\phi_x\|_{L^6_x((-1,1))} \\
&\leq C_1 \|(n_-)_x\|_{L^2_x((-1,1))} \|n_-\|_{L^2_x((-1,1))}^{1/2} \|n_-\|_{H^1_x((-1,1))}^{1/2} \|\phi_x\|_{L^6_x((-1,1))} \\
&\leq C_1 \|n_-\|_{H^1_x((-1,1))}^{3/2} \|n_-\|_{L^2_x((-1,1))}^{1/2} \|\phi_x\|_{L^6_x((-1,1))} \\
&\leq \frac{1}{2} \|n_-\|_{H^1_x((-1,1))}^2 + C_2 \|n_-\|_{L^2_x((-1,1))}^2 \|\phi_x\|_{L^6_x((-1,1))}^4,
\end{align*}
where $C_1$, $C_2$ depend only on the domain $(-1,1)$. Using Poisson's equation of $\phi$ in (\ref{id1}) with Robin boundary condition of (\ref{id2}),
we have $\|\phi_x\|_{L^6_x((-1,1))} \leq
C_3(1+\|n-p\|_{L^2_x((-1,1))})$, where $C_3$ depends only on the domain $(-1,1)$,
$\varepsilon$, $\gamma_\varepsilon$, and $\phi_0(\pm 1)$. Therefore,
\begin{align}
\frac{1}{2}\frac{d}{dt}\int_{-1}^1 n_-^2 dx &\leq C_2 C_3 \left(1+\|n-p\|_{L^2_x((-1,1))}\right)^4 \int_{-1}^1 n_-^2 dx \label{eta_n}\\
&= f(t) \int_{-1}^1 n_-^2 dx, \notag
\end{align}
where $f(t) := C_2 C_3 \left(1+\|n-p\|_{L^2_x((-1,1))}\right)^4 \in L^1((0,T))$ for
any $T>0$ since $n,p \in L^\infty(0,T;L^2(-1,1))$ solve (\ref{id1})-(\ref{id2}). By (\ref{eta_n}) and the
fact that $n_-(x,0) = 0$ for $x\in (-1,1)$, we have $n_- \equiv 0$, i.e., $n \geq 0$.
Similarly, we get
\begin{align}
\frac{1}{2}\frac{d}{dt}\int_{-1}^1 p_-^2 dx &\leq f(t) \int_{-1}^1 p_-^2 dx, \label{eta_p}
\end{align}
and $p_-(x,0) = 0$. Therefore, $p_- \equiv 0$, i.e., $p \geq 0$ and we may complete the proof of Proposition~\ref{rmk_eta}.

\section*{Appendix~III}
\ \ \ \
Here we prove that $I\left( t \right)\le I\left( 0 \right)={{I}_{0}}<\frac{\theta {{m}_{0}}\varepsilon }{2}$ for $t>0$ if $I\left( 0 \right)={{I}_{0}}<\frac{\theta {{m}_{0}}\varepsilon }{2}$ holds true for some $0<\theta <1$. Using (\ref {I-0-decrease}), it is equivalent to show that $\Im =\left\{ t\ge 0:I\left(s\right)\le \frac{\theta {{m}_{0}}\varepsilon }{2} \mbox{ for } 0 \le s \le t\right\}=\left[ 0,\infty  \right)$ if $I\left( 0 \right)={{I}_{0}}<\frac{\theta {{m}_{0}}\varepsilon }{2}$ holds true for some $0<\theta <1$. By the continuity of function $I$, there exists ${{t}_{1}}>0$ such that $I\left( t \right)<\frac{{{m}_{0}}\varepsilon }{2}$ for $0\le t \le {{t}_{1}}$, which satisfies the condition of (\ref {I-0-decrease}). Please note that ${{d}^{2}}\left( t \right)\le I\left( t \right)$ for $t\ge 0$. Consequently, (\ref {I-0-decrease}) implies that $I\left( {{t}_{1}} \right)\le I\left( t \right)\le I\left( 0 \right)<\frac{\theta {{m}_{0}}\varepsilon }{2}$ for $0\le t \le {{t}_{1}}$. That is, $t_1 \in \Im$. Moreover, we claim that $\Im $ is open in $\left[ 0,\infty  \right)$. Suppose $0<{{t}_{2}}\in \Im $. Then $I\left(t\right)\le \frac{\theta {{m}_{0}}\varepsilon }{2}$ for all $t \le t_2$, which implies $[0,t_2] \subset \Im$. By the continuity of function $I$, there exists $\delta >0$ such that $I\left( t \right)<\frac{{{m}_{0}}\varepsilon }{2}$ for ${{t}_{2}}\le t<{{t}_{2}}+\delta $, and the condition of (\ref {I-0-decrease}) holds true for ${{t}_{2}}\le t<{{t}_{2}}+\delta $. Hence by (\ref {I-0-decrease}), $I\left( {{t}_{2}}+\delta  \right)\le I\left( t \right)\le I\left( {{t}_{2}} \right)\le \frac{\theta {{m}_{0}}\varepsilon }{2}$ for ${{t}_{2}}\le t<{{t}_{2}}+\delta $, which implies that $\left[ {{t}_{2}},{{t}_{2}}+\delta  \right)\subset \Im $ and $\Im $ is open in $\left[ 0,\infty  \right)$. On the other hand, it is trivial that $\Im $ is closed in $\left[ 0,\infty  \right)$ because of the continuity of function $I$. Therefore, $\Im =\left\{ t\ge 0:I\left(s\right)\le \frac{\theta {{m}_{0}}\varepsilon }{2} \mbox{ for all } 0 \le s \le t\right\}=\left[ 0,\infty  \right)$ and (\ref {I-0-decrease}) gives $I\left( t \right)\le I\left( 0 \right)={{I}_{0}}<\frac{\theta {{m}_{0}}\varepsilon }{2}$ for $t>0$.

\section{Acknowledgment}
\ \ \ \ Chia-Yu Hsieh wishes to express sincere thanks to the Department of Mathematics of Pennsylvania State University for the chance of one-year visit.
Tai-Chia Lin is partially supported by the National Science Council of Taiwan grants NSC-102-2115-M-002-015 and NSC-100-2115-M-002-007.


\begin{thebibliography}{10}

\bibitem{AMT-ttsp00} {\sc A. Arnold, P. Markowich, G. Toscani}, {\em On large time asymptotics for drift-diffusion Poisson systems}, Transport Theory Statist. Phys. 29(2000), no. 3-5, 571-581.

\bibitem{BCE} {\sc V. Barcilon, D. P. Chen, R. S. Eisenberg, and J. W.
Jerome}, {\em Qualitative properties of steady-state Poisson-Nernst-Planck systems: perturbation and simulation study}, SIAM J. APPL. MATH. Vol.57, No.3, pp.631--648 (1997).

\bibitem{bhn-na94} P. Biler, W. Hebisch and T. Nadzieja, The Debye system: existence and large time behavior of solutions, Nonlinear Analysis, TMA, Vol.23, No.9, pp.~1189-1209, 1994.

\bibitem{BD-ahp00} {\sc P. Biler and J. Dolbeault}, {\em Long Time Behavior of Solutions to Nernst-Planck and Debye-Huckel Drift-Diffusion Systems}, Ann. Henri Poincare, 1 (2000) 461-472.

\bibitem{CLB-bp97} {\sc D Chen, J Lear, and B Eisenberg} {\em Permeation through an open channel: Poisson-Nernst-Planck theory of a synthetic ionic channel}, Biophys J. (1997), 72(1) pp. 97-116.

\bibitem{E98} {\sc B. Eisenberg}, {\em Ionic Channels in Biological Membranes: Natural Nanotubes}, Acc. Chem. Res., 31 (1998), pp.117--123.

\bibitem{EL-sm07}{\sc B. Eisenberg and W. Liu}, {\em Poisson-Nernst-Planck systems for ion channels with permanent charges}, SIAM J. Math. Anal. 38-6(2007), pp. 1932-1966.

\bibitem{G-zamm85}{\sc H. Gajewski} {\em On existence, uniqueness and asymptotic behavior of solutions of the basic equations for carrier transport in semiconductors}, Z. Angew. Math. Mech. 65(1985), no. 2, 101-108.

\bibitem{H1} {\sc B. Hille}, {\em Ion channels of excitable membranes}, 3rd Edition, Sinauer Associates, Inc. (2001).

\bibitem{H-Z-81} {\sc R. J. Hunter}: {\em Zeta Potential in Colloid Science}, Academic Press Inc. (1981).

\bibitem{LMB} {\sc D. Lacoste, G.I. Menon, M.Z. Bazant, and J.F.
Joanny}, {\em Electrostatic and electrokinetic contributions to the elastic moduli of a driven membrane}, Eur. Phys. J. E 28 (2009) 243--264.

\bibitem{LLHLL} {\sc C. C. Lee, H. Lee, Y. Hyon, T. C. Lin and C. Liu},
{\em New Poisson-Boltzmann Type Equations: One-Dimensional Solutions}, Nonlinearity 24 (2011) 431--458.

\bibitem{Lu} {\sc W. Liu}, {\em Geometric singular perturbation approach
to steady-state Poisson-Nernst-Planck systems}, SIAM J. Appl. Math. (2005) Vol.65. No.3, pp.754--766.

\bibitem{MRS} {\sc P. A. Markowich, C. A. Ringhofer, C. Schmeiser}, {\em Semiconductor equations}, Springer-Verlag, Vienna, (1990).

\bibitem{Mori1} {\sc Y. Mori, J.W. Jerome, and C.S. Peskin},
{\em A Three-dimensional Model of Cellular Electrical Activity},
Bulletin of the Institute of Mathematics Academia Sinica, 2(2)
(2007), pp.~367--390.

\bibitem{PJ} {\sc J. H. Park and J. W. Jerome}, {\em Qualitative properties of steady-state Poisson-Nernst-Planck systems: mathematical study}, SIAM J. APPL. MATH. Vol.57, No.3, pp.609--630 (1997).

\bibitem{RCA89} {\sc O. J. Riveros, T. L. Croxton, and W. M. Armstrong},
{\em Liquid Junction Potentials Calculated From Numerical Solutions of the Nernst-Planck and Poisson Equations}, J. Theor. Biol., 140(1989), pp.221--230.

\bibitem{Rolf2} {\sc R. Ryham, C. Liu and L. Zikatanov},
{\em Mathematical Models for the Deformation of Electrolyte
Droplets}, Discrete Contin. Dyn. Syst. Ser. B 8 (2007), no.~3,
p.~649--661.

\bibitem{WXLLS-N-14} {\sc L. Wan, S. Xu, M. Liao, C. Liu and P. Sheng}, {\em New Perspectives on Electrokinetics}, Phys. Rev. X (2014), in press.

\end{thebibliography}
\end{document}